\documentclass[11pt,leqno]{amsart}

\usepackage{amsmath,amssymb,amsfonts,amstext,amsthm}
\usepackage{mathtools,latexsym,amscd}
\usepackage[T1]{fontenc}
\usepackage[utf8]{inputenc}
\usepackage{microtype}
\usepackage{color}
\usepackage{parskip}
\usepackage{enumitem}
\usepackage[pagebackref,colorlinks]{hyperref}

\allowdisplaybreaks[1]
\setlist[enumerate]{leftmargin=2em,itemsep=3pt,topsep=5pt}

\theoremstyle{plain}
\newtheorem{theorem}{Theorem}
\newtheorem{lemma}[theorem]{Lemma}

\newtheorem{proposition}[theorem]{Proposition}

\theoremstyle{definition}

\newtheorem*{remark*}{Remark}

\newcommand{\R}{\mathbb R}
\newcommand{\Z}{\mathbb Z}
\newcommand{\F}{F_2}
\newcommand{\Hb}{H_b}
\newcommand{\rHb}{\overline H_b}
\newcommand{\rEH}{\overline{EH}_b}
\newcommand{\Sym}{\operatorname{Sym}}
\newcommand{\Homeo}{\operatorname{Homeo}}
\newcommand{\Mod}{\operatorname{Mod}}
\newcommand{\id}{\operatorname{id}}
\newcommand{\im}{\operatorname{im}}
\newcommand{\eps}{\varepsilon}
\newcommand{\cG}{\mathcal T_M}
\newcommand{\cN}{\mathcal T_{N_g}}
\newcommand{\norm}[1]{\left\lVert#1\right\rVert}
\newcommand{\pair}[2]{\left\langle#1,#2\right\rangle}
\newcommand{\dd}{\,\mathrm d}

\hypersetup{
  pdftitle={Exact bounded cohomology of measure-preserving homeomorphism groups of nonorientable hyperbolic manifolds},
  pdfauthor={Michael Brandenbursky},
  pdfsubject={Reduced exact bounded cohomology of measure-preserving homeomorphism groups},
  pdfkeywords={bounded cohomology, non-orientable manifolds, homeomorphism groups, transfer maps}
}

\begin{document}
\raggedbottom
\title[Exact bounded cohomology of measure-preserving homeomorphism groups]
{Exact bounded cohomology of measure-preserving homeomorphism groups of nonorientable hyperbolic manifolds}
\author{Michael Brandenbursky}\thanks{M.B. was partially supported by the Israel Science Foundation grant 823/23.}
\address{Department of Mathematics, Ben Gurion University, Israel}
\email{brandens@bgu.ac.il}
\date{}

\begin{abstract}
Let $M$ be a closed, connected, non-orientable hyperbolic
manifold, and let $\mu$ be a good measure on $M$. Denote by
$\Homeo_0(M,\mu)$ the identity component of the group of
$\mu$-preserving homeomorphisms. For every $d\geq2$, we prove that
$$
\dim\rEH^d\bigl(\Homeo_0(M,\mu)\bigr)=\infty,
$$
where $\rEH^d$ denotes reduced exact bounded cohomology
in degree $d$.
\end{abstract}


\maketitle

\markboth{Exact bounded cohomology of measure-preserving homeomorphism groups}
{Exact bounded cohomology of measure-preserving homeomorphism groups}

\section{Introduction}\label{sec:introduction}

Gromov's seminal work \cite{Gromov} established bounded cohomology as a
fundamental tool in geometry and topology, with applications to the geometry
of manifolds, stable commutator length, amenable groups, and symplectic
geometry. In low degrees it is understood for many classes of finitely
generated groups. In higher degrees, however, even the bounded cohomology of
the non-abelian free group $F_2$ remains much less understood; see
\cite{Frigerio}.

Since the early 2000s, bounded cohomology of transformation groups has
received considerable attention. Degree-two exact bounded cohomology is described by
homogeneous quasimorphisms modulo homomorphisms.
Quasimorphisms have applications to dynamics and symplectic
geometry; see \cite{Entropy,G-invariant,EntovPolterovich2003}. 
Higher-degree bounded classes were studied by Brandenbursky-Marcinkowski
\cite{BrandenburskyMarcinkowski2022}, Kimura \cite{Kimura2025}, Nitsche \cite{Nitsche}
and Brandenbursky-Menashe \cite{BrandenburskyMenashe2026} 
using transfer maps from fundamental groups and braid groups.
In \cite{Brandenbursky2026}, these maps were combined with external
products to construct bounded cohomology classes in every degree
greater than one. In this paper we apply this construction to
closed non-orientable hyperbolic manifolds.

Let $M$ be a closed, connected, non-orientable hyperbolic manifold.
A finite Borel measure $\mu$ on $M$ is called \emph{good} if it is
nonatomic and positive on every nonempty open set. Let
$\cG=\Homeo_0(M,\mu)$
denote the identity component of the group of $\mu$-preserving
homeomorphisms, equipped with the compact-open topology. 
Throughout the paper, cohomology is taken for groups
regarded as discrete. Denote by
$EH_b^d(\cG)$ exact bounded cohomology and by $\rEH^d(\cG)$
reduced exact bounded cohomology, with trivial real coefficients;
the definitions are recalled in Section~\ref{sec:preliminaries}.
Our main result is the following.

\begin{theorem}\label{thm:main}
Let $M$ be a closed, connected, non-orientable hyperbolic manifold,
and let $\mu$ be a good measure. Then, for every $d\ge2$,
$$
 \dim\rEH^d(\Homeo_0(M,\mu))=\infty.
$$
In particular, $\dim EH_b^d(\Homeo_0(M,\mu))=\infty$ for every
$d\ge2$.
\end{theorem}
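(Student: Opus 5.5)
I will use the transfer-map construction of \cite{Brandenbursky2026}, together with a pullback from the fundamental group of $M$. Fix a basepoint $z\in M$ and write $\Gamma=\pi_1(M,z)$.

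\emph{Step 1: the transfer map.} Choose a measurable family of paths from $z$ to $x$, one for each $x\in M$, with uniformly bounded length; this is possible since $M$ is compact. For $g\in\cG$, take an isotopy $\{g_t\}$ from the identity to $g$ inside $\cG$. For each $x$, concatenate three paths:
\begin{enumerate}
\item the chosen path from $z$ to $x$;
\item the trajectory $t\mapsto g_t(x)$;
\item the reverse of the chosen path from $z$ to $g(x)$.
\end{enumerate}
This loop determines a class $\gamma(g,x)\in\Gamma$. Since $M$ is hyperbolic, $\pi_1(\Homeo_0(M))$ is trivial, because $\Gamma$ has trivial center. Hence $\gamma(g,x)$ does not depend on the isotopy, and $(g,x)\mapsto\gamma(g,x)$ is a cocycle over the measure-preserving action. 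For a bounded homogeneous cocycle $c$ on $\Gamma$, define
$$\Phi(c)(g_1,\dots,g_d)=\int_M c\bigl(\gamma(g_1,x),\gamma(g_2,g_1x),\dots\bigr)\dd\mu(x).$$
The map $\Phi$ commutes with coboundaries and is norm non-increasing. So it induces a map $\Hb^d(\Gamma)\to\Hb^d(\cG)$, and also a map between the exact and reduced exact versions. I will take this as established by the earlier machinery. The nonorientability of $M$ plays no role up to this point, except that the measure is not a volume form. That is harmless, since we only need $\mu$ good; by Oxtoby--Ulam-type results we may reduce to a volume-like measure.

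\emph{Step 2: a rich source on the group side.} Since $\Gamma$ is a nonelementary hyperbolic group, $\rEH^2(\Gamma)$ is infinite-dimensional, spanned by Brooks/Epstein--Fujiwara quasimorphisms. For $d\ge3$, I plan to take external (cup) products of quasimorphism classes, with the relevant bounded classes supported on free subgroups, following \cite{Brandenbursky2026}. The approach is to pass to a subgroup $F_2\times\dots$ or to use classes $\delta\phi_1\smile\cdots$. Concretely: choose quasimorphisms $\phi_1,\dots,\phi_{d-1}$ that are nontrivial and linearly independent. Form the cup product of $[\delta\phi_1]$ with $d-2$ further such classes, and show the resulting family is infinite and independent after transfer.

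\emph{Step 3: injectivity on a suitable infinite family (the main obstacle).} I need independence of the classes in $\rEH^d(\cG)$, not merely in $\Gamma$. My plan is to construct, for each $n$, elements $g_1,\dots,g_k\in\cG$ generating a copy of $\Gamma$ or of $F_2$ (or a product of such groups, up to bounded error). These will be point-pushing homeomorphisms supported in small balls, moving a region of measure close to $\mu(M)$ along loops realizing chosen elements of $\Gamma$. Pulling back $\Phi(c)$ to this subgroup then gives $\mu(M)\,c$ plus an error of norm $\le\eps\norm{c}$. Pairing with bounded cycles, or using seminorm estimates, nonvanishing in the reduced group then persists. For products in degree $d$, I would use $d-1$ disjoint-support families to obtain commuting copies, and apply the Künneth-type nonvanishing of external products from \cite{Brandenbursky2026}.

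The delicate point is to control the $\mu$-error uniformly while keeping actual isotopies measure-preserving. For this I would first try the standard finger-push construction, corrected by a measure-rearrangement on a small set; this contributes a bounded defect.
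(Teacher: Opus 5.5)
Your Step~1 (one-point transfer, compatible with comparison) is fine. Steps~2--3, however, contain no mechanism that produces infinitely many independent classes in degree $d\ge3$.

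First, products of quasimorphism classes only reach even degrees. Indeed, $[\delta\phi_1]$ cupped with $d-2$ further such classes has degree $2(d-1)$, not $d$. Odd $d$ needs a degree-three seed. The paper writes $d=2m+q$ with $q\in\{2,3\}$ and uses $\dim\rHb^3(\F)=\infty$, transported to $\pi_1(M)$ by linear sections $J_3$ with $i^*J_3=\id$. These come from Frigerio--Pozzetti--Sisto for a hyperbolically embedded $\F$ when $\dim M\ge3$, and from isometric sections for $\pi_1(N_g)=\F*_{\Z}\pi_1(N_{g-2,1})$ when $\dim M=2$.

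Second, suppose you cup on $\Gamma$ and then apply the one-point transfer. Pulling back along a disjoint-support testing map $\F^k\to\cG$ of the kind you describe gives, up to small error, $\sum_j\lambda_j\,p_j^*\bigl(i^*c_1\smile\cdots\smile i^*c_r\bigr)$. Only cup products inside a single free factor occur, never external products such as $i^*c_1\times i^*c_2$, so no Künneth-type argument applies. Moreover, these same-factor products are exactly the terms the paper arranges to vanish, via $\theta\smile u=0$. What is needed is the product transfer over $M^k$ of $J_2(\theta)^{\times m}\times J_q(v)$ (equivalently, cup products taken in $\cG$ after transferring), with $\theta$ fixed and only $v$ varying. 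One also needs a device killing the diagonal terms and the left inverses of $E_m^q,T_m^q$ from Proposition~\ref{prop:products}. That is what makes $v\mapsto\Phi_{m,q}(v)$ injective. Saying the family ``is infinite and independent after transfer'' restates the theorem rather than proving it.

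Third, Step~3 fails on surfaces. It needs a number of commuting families that grows with $d$, with pairwise disjoint supports, each realizing a nonabelian free subgroup of $\pi_1(M)$ through its trajectories; in the paper it is the same subgroup up to conjugacy. For $\dim M\ge3$, general position gives arbitrarily many disjoint figure-eights. But $N_g$ contains at most $|\chi(N_g)|=g-2$ pairwise disjoint essential subsurfaces with nonabelian fundamental group, and homeomorphisms pushed inside discs have trivial one-point trajectory classes.

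This is why the paper treats $\dim M=2$ with a genuinely different local source: a degree-two exact class on $B_3(N_g)$. It is built from quasimorphisms on $\Mod(N_g,3)$ via the orientation double cover and Bestvina--Bromberg--Fujiwara (Lemma~\ref{lem:infinite-disc-restriction}), and it stays nonzero on $\langle\sigma_1^2,\sigma_2^2\rangle\cong\F$ in a disc (Lemma~\ref{lem:braid-class}). It is then transferred over $X_3(N_g)$ and tested by rotations in arbitrarily many disjoint discs. It is cupped with a single one-point factor $\Theta_q(v)$, tested by annular pushes in the one-holed torus $Q$, which carries both the varying class and exactness; $\Pi_m$ removes the diagonal terms. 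Your proposal has no counterpart to this.

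Smaller points:
\begin{enumerate}
\item For $\dim M\ge3$, the paper takes $i(\F)\subset\ker w_M$ so that the tubes are $S^1\times D^{n-1}$. Then one full circuit of the push returns each inner point to itself, which fails around an orientation-reversing loop. Moser's theorem then gives exactly measure-preserving pushes, so no ``measure rearrangement'' and no moving of measure close to $\mu(M)$ is needed.
\item Pushes supported in small balls have trivial trajectory classes.
\item $Z(\Gamma)=1$ only shows that loops in $\Homeo_0(M)$ evaluate trivially in $\Gamma$, not that $\pi_1(\Homeo_0(M))$ is trivial.
\end{enumerate}
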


We divide the proof according to the dimension of $M$.
The Oxtoby--Ulam theorem
\cite[Part~II, Corollary~1]{OxtobyUlam1941} allows us to
reduce both cases to a measure induced by a smooth positive
density; see Lemma~\ref{lem:good-measures}.

Let $N_g$ denote the closed non-orientable surface of genus $g$,
and $\Sigma_g$ the closed oriented surface of genus $g$.
In dimension two, $M=N_g$ for some $g>2$, and its orientation
double cover is $\Sigma_{g-1}$. Using this cover, we prove in
Lemma~\ref{lem:infinite-disc-restriction} that, for $n\ge3$,
the degree-two exact bounded cohomology of the mapping class
group of $N_g$ with $n$ marked points has infinite-dimensional
image under restriction to the braid subgroup supported
in a disc containing those points.

For the surface case of Theorem~\ref{thm:main}, we apply this
lemma with $n=3$ and choose one class with nonzero disc
restriction. Pullback along the point-pushing homomorphism
gives an exact degree-two bounded cohomology class on the
surface braid group $B_3(N_g)$ whose restriction to the disc
braid group remains nonzero. Lemma~\ref{lem:braid-class}
shows that this class also remains nonzero on the free
subgroup generated by the squares of the two standard
Artin generators of the disc braid group. This restriction
is detected by the local disc rotations used in the proof.

We then combine the one-point maps of
\cite{BrandenburskyMenashe2026} with the mixed transfer of
\cite[Section~5.1]{Brandenbursky2026}. The transfer of the
braid class supplies a fixed degree-two factor. 
We take cup products of its powers with the degree-two and
degree-three one-point classes of Brandenbursky-Menashe~\cite{BrandenburskyMenashe2026}. 
The one-point factor ensures exactness, while the testing homomorphisms establish
injectivity as the one-point class varies. This gives the
infinite-dimensional families of reduced exact bounded
cohomology classes required in the surface case.

When $\dim M\ge3$, we choose a hyperbolically embedded free
subgroup of rank two in $\pi_1(M)$ that lies in the kernel
of the orientation character. We realize this subgroup by an embedded figure-eight.
Since the subgroup lies in the kernel of the orientation
character, a regular neighborhood of this figure-eight
is orientable. After choosing an orientation on this
neighborhood, the restriction of the density is represented
by a positive volume form. Moser's theorem allows us to
carry out the local volume-preserving pushing construction
for this form. The resulting local maps allow us to apply
the transfer argument of
\cite[Theorem~1, case~(iii)]{Brandenbursky2026}, completing
the proof in higher dimensions.

\textbf{Acknowledgments.}
The author acknowledges the support of Israel Science Foundation grant
823/23. He thanks the Department of Mathematics at Ben Gurion University for
excellent working conditions. The author used an LLM for computational exploration, algebraic checks,
and editorial assistance. The mathematical ideas, research direction, and the transfer method
approach were developed exclusively by the author.

\section{Preliminaries}\label{sec:preliminaries}

\emph{Bounded cohomology.}
Throughout the paper, groups are discrete and all
coefficients are trivial and real. Let $K$ be a discrete group.
We use right-homogeneous cochains, as in
\cite{Brandenbursky2026}. Thus $C_b^d(K)^K$ is the space of
bounded functions $c:K^{d+1}\to\R$ satisfying
$c(g_0h,\ldots,g_dh)=c(g_0,\ldots,g_d)$, $(g_0,\ldots,g_d,h\in K)$.
The differential is given by
$$
(\delta c)(g_0,\ldots,g_{d+1})
=\sum_{j=0}^{d+1}(-1)^jc(g_0,\ldots,\widehat g_j,\ldots,g_{d+1}).
$$
The cohomology of this complex is $\Hb^d(K)$. The supremum norm on cochains
induces the canonical seminorm on $\Hb^d(K)$. We write
$$
 \rHb^d(K)=\Hb^d(K)\big/\{u\in\Hb^d(K):\norm{u}=0\}
$$
for reduced bounded cohomology, with the induced quotient norm.
For a bounded cocycle $c$, the notation $[c]$ denotes its class
in the bounded or reduced bounded cohomology group indicated.
A function $\varphi:K\to\R$ is a \emph{homogeneous quasimorphism}
if $\varphi(g^j)=j\varphi(g)$ for all $g\in K$ and $j\in\Z$, and
$$
 \sup_{g,h\in K}|\varphi(gh)-\varphi(g)-\varphi(h)|<\infty.
$$
For such a function, $\delta\varphi$ denotes, in our convention,
the bounded cocycle
$$
 (\delta\varphi)(g_0,g_1,g_2)
 =\varphi(g_0g_1^{-1})+\varphi(g_1g_2^{-1})
                         -\varphi(g_0g_2^{-1}).
$$

Dropping the boundedness condition gives the ordinary cochain
complex $C^\bullet(K)^K$, whose degree-$d$ cohomology is
$H^d(K;\R)$.
Let $c_K^d$ denote the comparison map induced by inclusion of
bounded cochains into this complex. Following
\cite{FrigerioPozzettiSisto2015}, let $EH_b^d(K)$ denote its
kernel, called the exact bounded cohomology of $K$. We write
$\rEH^d(K)$ for the reduced exact bounded cohomology:
$$
 \begin{aligned}
 EH_b^d(K)
   &=\ker\bigl(c_K^d:\Hb^d(K)\longrightarrow H^d(K;\R)\bigr),\\
 \rEH^d(K)
   &=\im\bigl(EH_b^d(K)\longrightarrow\rHb^d(K)\bigr).
 \end{aligned}
$$
Thus $\rEH^d(K)$ is $EH_b^d(K)$ modulo its zero-seminorm
subspace, with the norm induced from $\rHb^d(K)$.
Every class in $EH_b^2(K)$ is represented by
$\delta\varphi$ for a homogeneous quasimorphism $\varphi$.
For such a quasimorphism, $[\delta\varphi]=0$ in $\Hb^2(K)$
if and only if $\varphi$ is a homomorphism.
See \cite{Calegari, Frigerio2017} for the basic properties.

Pullback by a group homomorphism is seminorm nonincreasing and
commutes with comparison. A group isomorphism therefore induces
isometric isomorphisms on reduced bounded cohomology and on
reduced exact bounded cohomology.

Let $\F=\langle a,b\rangle$ be the free group of rank two.
We shall use
\begin{equation}\label{eq:free-dimensions}
 \dim\rHb^2(\F)=\dim\rHb^3(\F)=\infty;
\end{equation}
see \cite{Brooks1981,Soma1997,Farre2018}.
Since $H^q(\F;\R)=0$ for $q\ge2$, these are also reduced exact
bounded cohomology groups.

\emph{Good measures and densities.}
A smooth positive density on a smooth $n$-manifold $M$ is a
positive smooth section of $|\Lambda^nT^*M|$. On a closed
manifold it defines a good measure.
On an oriented open set $U$, it is represented by a unique
positive volume form $\omega_U$. In particular, every
Riemannian metric determines a smooth positive density.

\begin{lemma}\label{lem:good-measures}
For each $M$, it suffices to prove Theorem~\ref{thm:main} for
one fixed probability measure $\mu$ induced by a smooth
positive density.
\end{lemma}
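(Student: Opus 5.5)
The plan is to show that for any two good measures $\mu,\nu$ on $M$ the groups $\Homeo_0(M,\mu)$ and $\Homeo_0(M,\nu)$ are isomorphic as discrete groups, via conjugation by a homeomorphism of $M$. The final remark of the preliminaries then does the rest: a group isomorphism induces isometric isomorphisms on $\rEH^d$, so $\dim\rEH^d$ does not depend on the good measure. Two reductions come first. Rescaling a measure by a positive constant does not change the group of measure-preserving homeomorphisms, so we may assume $\mu(M)=1$. Next, fix any smooth structure on $M$, which exists because $M$ is hyperbolic, hence smooth. Choose a Riemannian metric, for instance the hyperbolic one, and normalize its volume density to total mass one. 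This produces a smooth positive density. As noted above, its measure $\nu$ is good, and it is a probability measure.

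The key input is the Oxtoby--Ulam theorem \cite[Part~II, Corollary~1]{OxtobyUlam1941}. For a compact connected manifold, it says that any two good probability measures $\mu,\nu$ are related by a homeomorphism $h$ of $M$ with $h_*\mu=\nu$, and $h$ may be taken isotopic to the identity. The version of the theorem we need must allow the manifold to be non-orientable and must treat the boundary as empty. I would check that the cited corollary is stated for arbitrary compact connected manifolds, and that "good" here matches its hypotheses: nonatomic, positive on open sets, and giving zero mass to the boundary, which is vacuous. This matching is the only real point to verify.

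Given such an $h$, conjugation $f\mapsto hfh^{-1}$ sends $\mu$-preserving homeomorphisms to $\nu$-preserving ones, with inverse given by conjugation by $h^{-1}$. Conjugation is a homeomorphism of $\Homeo(M)$ in the compact-open topology. Hence it maps the identity component $\Homeo_0(M,\mu)$ onto $\Homeo_0(M,\nu)$, since it is a homeomorphism of the two groups fixing the identity. Whether or not $h$ is isotopic to the identity does not matter for this step. We therefore get an isomorphism of abstract groups, and thus isometric isomorphisms $\rEH^d(\Homeo_0(M,\mu))\cong\rEH^d(\Homeo_0(M,\nu))$ and likewise for $EH_b^d$. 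So proving Theorem~\ref{thm:main} for the single fixed $\nu$ proves it for every good $\mu$.

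I expect the main obstacle to be only the bookkeeping about the hypotheses of Oxtoby--Ulam. Everything else is formal.
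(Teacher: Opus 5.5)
Your proposal is correct and follows the same route as the paper. You rescale to a probability measure, invoke Oxtoby--Ulam to get $h$ with $h_*\mu=\nu$, conjugate by $h$ to obtain an isomorphism $\Homeo_0(M,\mu)\cong\Homeo_0(M,\nu)$, and use that a group isomorphism commutes with comparison maps and so induces isometric isomorphisms on (reduced) exact bounded cohomology. Your remark that conjugation is a homeomorphism of $\Homeo(M)$, and hence carries identity components to identity components, makes explicit what the paper states as a topological group isomorphism.
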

\begin{proof}
Rescaling a good measure $\mu$ does not change
$\Homeo_0(M,\mu)$, so assume that $\mu(M)=1$.
Fix a probability measure $\nu$ induced by a smooth positive
density. By the Oxtoby--Ulam theorem
\cite[Part~II, Corollary~1]{OxtobyUlam1941}, there is a
homeomorphism $h$ such that $h_*\mu=\nu$; see also
\cite[Theorem~3.1]{Fathi1980}. Conjugation gives a topological
group isomorphism
$$
 h\,\Homeo_0(M,\mu)\,h^{-1}=\Homeo_0(M,\nu).
$$
The pullbacks induced by this conjugation isomorphism and its
inverse are inverse isometries of bounded cochain complexes.
They commute with the ordinary
comparison maps. Hence they induce seminorm-preserving
isomorphisms on exact bounded cohomology and isometric
isomorphisms on reduced exact bounded cohomology.
\end{proof}

From now on we fix a hyperbolic metric on $M$, and let $\mu$ be
the measure induced by its normalized Riemannian density.
Let $p:\widetilde M\to M$ be the orientation double cover,
with its canonical orientation, and let $\tau$ be its
nontrivial deck transformation. Write $\widetilde\omega$
for the volume form of the lifted metric, rescaled so that
$\int_{\widetilde M}\widetilde\omega=2$. Then
$\tau^*\widetilde\omega=-\widetilde\omega$, and explicitly
$$
 \mu(A)=\frac12\int_{p^{-1}(A)}\widetilde\omega
 \qquad(A\subset M\text{ Borel}).
$$
Thus $\mu(M)=1$. The canonical orientation-preserving lift
of every $\mu$-preserving homeomorphism preserves the measure
induced by $\widetilde\omega$. For $M=N_g$, this is the
measure used in \cite[Section~1]{BrandenburskyMenashe2026}.
All subsequent constructions and both cases of the proof of
Theorem~\ref{thm:main} use this fixed measure.
Lemma~\ref{lem:good-measures} then gives the result for every
good measure.

\emph{External products.}
Let $K_1,\ldots,K_k$ be groups, where $k\ge1$, and write
$p_j:K_1\times\cdots\times K_k\to K_j$ for the coordinate
projections. Denote the cup product by $\smile$.
For $u_j\in\Hb^{d_j}(K_j)$, define
$$
u_1\times\cdots\times u_k=p_1^*u_1\smile\cdots\smile p_k^*u_k
\in\Hb^{d_1+\cdots+d_k}(K_1\times\cdots\times K_k).
$$
Cup and external products descend to reduced bounded cohomology
and satisfy
$$
\norm{u\smile v}\le\norm{u}\norm{v},\qquad\norm{u\times v}\le\norm{u}\norm{v}.
$$
When all factors are a group $K$, the symmetric group $S_k$
acts on $K^k$ by
$$
\sigma(g_1,\ldots,g_k)=(g_{\sigma^{-1}(1)},\ldots,g_{\sigma^{-1}(k)}).
$$
For $u\in\Hb^d(K^k)$, put
$$
\Sym_k(u)=\sum_{\sigma\in S_k}\sigma^*u.
$$
We use this unnormalized sum also on reduced bounded cohomology.

Write $Z_2^{\ell^1}(\F)$ for the degree-two cycles in the
$\ell^1$-completed bar complex, and use $\pair{\cdot}{\cdot}$
for evaluation of bounded cohomology on these cycles.
Fix a nonzero class $\theta\in\Hb^2(\F)$ and a cycle
$z\in Z_2^{\ell^1}(\F)$ such that
$\pair{\theta}{z}=1$.
Such a cycle exists by degree-two
bounded-cohomology--$\ell^1$-homology duality; see
\cite[Lemmas~7--8]{Brandenbursky2026} and
\cite{MatsumotoMorita1985}.

For $m\ge0$ and $q\in\{2,3\}$, put $d=2m+q$ and $k=m+1$.
The notation $\theta^{\times m}$ means the external product
of $m$ copies of $\theta$; when $m=0$, this is the
degree-zero unit. Define
\begin{equation}\label{eq:ordered-product}
 E_m^q:\Hb^q(\F)\longrightarrow\Hb^d(\F^k),
 \qquad E_m^q(v)=\theta^{\times m}\times v,
\end{equation}
and
\begin{equation}\label{eq:symmetric-product}
 T_m^q:\Hb^q(\F)\longrightarrow\Hb^d(\F^k),\qquad
 T_m^q(v)=\Sym_k(\theta^{\times m}\times v).
\end{equation}
Both maps descend to reduced bounded cohomology.
We use the same notation for these induced maps.

\begin{proposition}[{\cite[Lemma~8 and Proposition~9]{Brandenbursky2026}}]
\label{prop:products}
For $q=2,3$ and every $m\ge0$, the maps $E_m^q$ and $T_m^q$
are injective. Their reduced versions have bounded linear
left inverses.
\end{proposition}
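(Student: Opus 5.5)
The plan is to build explicit bounded left inverses by slant products against the fixed $\ell^1$-cycle $z$, and to read injectivity off from them. First take the ordered maps $E_m^q$. For a class $w\in\Hb^d(\F^k)$ with $d=2m+q$, I would define the partial evaluation
$$
L(w)=\pair{w}{z^{\times m}\times\,\cdot\,}\in\Hb^q(\F).
$$
Concretely, choose a bounded cocycle representing $w$. Form the $\ell^1$ cross product of $m$ copies of $z$ in the first $m$ factors, using the Eilenberg--Zilber shuffle map on the bar complex, which is bounded in $\ell^1$. Then evaluate on those factors, which leaves a bounded cochain on the last factor $\F$. I need to check three things: the result is a cocycle; it is independent of the representative up to coboundaries, since $z$ is a cycle and slant products with cycles are chain maps; and $\norm{L(w)}\le C\norm{w}\norm{z}_1^m$. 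Multiplicativity of slant products with cross products then gives
$$
L(\theta^{\times m}\times v)=\pair{\theta}{z}^m\,v=v.
$$
So $L$ is a bounded left inverse of $E_m^q$, and it descends to reduced cohomology because it is norm bounded. Injectivity follows in both the unreduced and reduced settings.

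For the symmetrized maps $T_m^q$, I would compose the same slant operator $L$ with $\Sym_k$ and compute
$$
L(\sigma^*(\theta^{\times m}\times v))
$$
for each $\sigma\in S_k$. Two cases arise.

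\begin{itemize}
\item If $\sigma$ fixes the last coordinate, it only permutes the copies of $\theta$. These have even degree, so there is no sign, and the term contributes $v$. There are $m!$ such terms.
\item If $\sigma$ moves $v$ into one of the first $m$ slots, some copy of $z$ is paired against $v$ and one copy of $\theta$ lands in the last factor. The contribution is then, up to sign, $\pair{v}{z}\,\pair{\theta}{z}^{m-1}\theta$.
\end{itemize}

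For $q=3$ the pairing of a degree-three class with a degree-two cycle vanishes for degree reasons, so $L\circ T_m^3=m!\cdot\id$. For $q=2$ we get
$$
L\circ T_m^2(v)=m!\,v+c_m\pair{v}{z}\,\theta
$$
with an explicit combinatorial constant $c_m=m\cdot m!$. This is a rank-one perturbation of a multiple of the identity, namely $A(v)=m!\,v+c_m\,\phi(v)\,\theta$ with $\phi=\pair{\cdot}{z}$ a bounded functional and $\phi(\theta)=1$. The map $A$ is invertible with bounded inverse exactly when $m!+c_m\ne0$, which holds since all terms are positive. The inverse is given by the Sherman--Morrison formula,
$$
A^{-1}(u)=\frac{1}{m!}\Bigl(u-\frac{c_m}{m!+c_m}\,\phi(u)\,\theta\Bigr).
$$
Hence $A^{-1}\circ L$ is a bounded left inverse of $T_m^2$.

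The main obstacle is making the slant product rigorous on bounded cohomology with $\ell^1$-cycles. This means showing that the cross product of $\ell^1$-cycles is an $\ell^1$-cycle of controlled norm and that the evaluation descends to cohomology. It also means tracking signs in $\sigma^*$ carefully enough that the cross terms add rather than cancel. I would handle the first issue by working with homogeneous cochains on products and the explicit shuffle map, which is a finite sum and so preserves $\ell^1$-boundedness. The sign issue is harmless, because even-degree factors commute without sign.
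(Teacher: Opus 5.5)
Your proposal is correct and is essentially the paper's argument, which it takes from \cite[Lemma~8 and Proposition~9]{Brandenbursky2026}: the slant product with $z^{\times m}$ is a bounded left inverse of $E_m^q$, and on $T_m^q$ the $m!$ permutations fixing the last slot contribute $v$ while the other $m\cdot m!$ contribute $\pair{v}{z}\theta$ (or vanish by degree when $q=3$), so $L_{Z_m}^*T_m^2=m!(\id+mP)$ with $P(v)=\pair{v}{z}\theta$. Your Sherman--Morrison inverse is exactly the paper's $(\id+mP)^{-1}=\id-\frac{m}{m+1}P$, since $c_m/(m!+c_m)=m/(m+1)$.
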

\begin{proof}
We recall the calculation of the left inverses from
\cite[Lemma~8 and proof of Proposition~9]{Brandenbursky2026}.
When $m=0$, both maps are the identity. For $m\ge1$, let
$Z_m=z^{\times m}\in Z_{2m}^{\ell^1}(\F^m)$ be the chain
cross product of $m$ copies of $z$. The bounded slant-product map
$$
L_{Z_m}^*:\Hb^{2m+q}(\F^m\times\F)\longrightarrow\Hb^q(\F)
$$
evaluates the first $m$ factors against $Z_m$; see
\cite[Lemma~8]{Brandenbursky2026}. It satisfies
$$
L_{Z_m}^*E_m^q(v)=v.
$$
For the symmetric product, the calculation in
\cite[Proposition~9]{Brandenbursky2026} gives
$$
L_{Z_m}^*T_m^3(v)=m!v,\qquad
L_{Z_m}^*T_m^2(v)=m!\bigl(v+m\pair{v}{z}\theta\bigr).
$$
The operator $P:\Hb^2(\F)\to\Hb^2(\F)$ given by
$P(v)=\pair{v}{z}\theta$ satisfies $P^2=P$, since
$\pair{\theta}{z}=1$.
Thus
$$
(\id+mP)^{-1}=\id-\frac{m}{m+1}P.
$$
Thus $L_{Z_m}^*$ is a left inverse of $E_m^q$,
$(1/m!)L_{Z_m}^*$ is a left inverse of $T_m^3$, and
$(1/m!)(\id-\frac{m}{m+1}P)L_{Z_m}^*$ is a left inverse of
$T_m^2$. These identities hold before reduction, and all
three left inverses descend to bounded maps on reduced bounded
cohomology.
\end{proof}

\emph{The product transfer.}
Fix $x_0\in M$ and put
$$
\pi=\pi_1(M,x_0),\qquad G=\pi_M=\pi/Z(\pi).
$$
Here $Z(\pi)$ is the center of $\pi$. For the manifolds
considered here $Z(\pi)=1$, so $G=\pi$.
Here $Z(\pi)$ is the center of $\pi$. For the manifolds
considered here $Z(\pi)=1$, so $G=\pi$.
Fix an auxiliary Riemannian metric on $M$. Outside the cut
locus of $x_0$, let $\alpha_x$ be the unique minimizing
geodesic from $x_0$ to $x$. The cut locus has $\mu$-measure
zero, since $\mu$ is induced by a smooth positive density.
Choose the remaining paths measurably. 

For $f\in\cG$, choose an isotopy $f_t$ from the identity
to $f$. The trajectory cocycle is
$$
\gamma:\cG\times M\longrightarrow G,\qquad
\gamma(f,x)=[\alpha_x*\{f_t(x)\}*\alpha_{f(x)}^{-1}]^{-1}.
$$
The bracket denotes the based homotopy class in $G$,
and $*$ denotes path concatenation from left to right, with
multiplication $[\alpha][\beta]=[\alpha*\beta]$.
This is the construction of
\cite[Section~3]{BrandenburskyMenashe2026}, with the inverse-loop
convention of \cite[equation~(22)]{Brandenbursky2026}.
With multiplication $fg=f\circ g$, it satisfies
$$
 \gamma(fg,x)=\gamma(f,gx)\gamma(g,x).
$$
The evaluation of a loop of homeomorphisms lies in $Z(\pi)$.
Thus the cocycle is independent of the chosen isotopies.

For $k\ge1$, equip $M^k$ with the product measure $\mu^k$ and
the diagonal action on $\mathbf x=(x_1,\ldots,x_k)$ given by
$h\mathbf x=(h(x_1),\ldots,h(x_k))$. Define
$$
\gamma_k:\cG\times M^k\longrightarrow G^k,\qquad
\gamma_k(f,\mathbf x)=(\gamma(f,x_1),\ldots,\gamma(f,x_k)).
$$
Define the cochain map
$\Gamma'_{b,k}:C_b^d(G^k)^{G^k}\to C_b^d(\cG)^{\cG}$ by
\begin{equation}\label{eq:product-transfer}
\begin{split}
(\Gamma'_{b,k}c)(f_0,\ldots,f_d)=\int_{M^k}c\bigl(\gamma_k(f_0,\mathbf x),\ldots,
\gamma_k(f_d,\mathbf x)\bigr)\dd\mu^k(\mathbf x).
\end{split}
\end{equation}
Right homogeneity follows from the cocycle identity and the
change of variables $\mathbf x\mapsto h\mathbf x$.
Integration commutes with $\delta$, and
$\norm{\Gamma'_{b,k}c}_\infty\le\norm{c}_\infty$.
We denote the induced maps by
$$
\Gamma_{b,k}^d:\Hb^d(G^k)\longrightarrow\Hb^d(\cG),\qquad
\overline\Gamma_{b,k}^{\,d}:\rHb^d(G^k)\longrightarrow\rHb^d(\cG).
$$

By \cite[Lemma~2.2]{BrandenburskyMarcinkowski2022}, for each
$f\in\cG$ there is a finite set $S_f\subset G$ such that
$$
\gamma(f,x)\in S_f\qquad\text{for almost every }x\in M.
$$
The proof applies without an orientability assumption.
We use this finite-image property as in
\cite[proof of Lemma~10, equation~(24)]{Brandenbursky2026}.
For each fixed tuple $f_0,\ldots,f_d\in\cG$, the map
$$
\mathbf x\longmapsto\bigl(\gamma_k(f_0,\mathbf x),\ldots,\gamma_k(f_d,\mathbf x)\bigr)
$$
takes values almost everywhere in the finite set
$S_{f_0}^{\,k}\times\cdots\times S_{f_d}^{\,k}$.
Hence, even for an ordinary cochain $c$ which is not bounded,
the integrand in \eqref{eq:product-transfer} takes only
finitely many values outside a null set. Since $\mu$ is
finite, this integrand is integrable. Thus
\eqref{eq:product-transfer} also defines a map on ordinary
cochains. Denote the resulting maps by
$$
 \Gamma'_k:C^d(G^k)^{G^k}\longrightarrow C^d(\cG)^{\cG},\qquad
 \Gamma_k^d:H^d(G^k;\R)\longrightarrow H^d(\cG;\R).
$$
The comparison identity
$$
 c_{\cG}^d\Gamma_{b,k}^d
 =\Gamma_k^d c_{G^k}^d
$$
shows that the transfer preserves exact classes; see
\cite[Lemma~2.2]{BrandenburskyMarcinkowski2022} and
\cite[Lemma~10]{Brandenbursky2026}.
These constructions do not require an orientation of $M$.

\emph{Configuration spaces.}
For $g>2$ and $r\ge1$, let
$$
 X_r(N_g)=\{(x_1,\ldots,x_r)\in N_g^r:
                   x_i\ne x_j\text{ for }i\ne j\}.
$$
The symmetric group $S_r$ acts by permuting the coordinates.
With chosen basepoints write
$$
P_r(N_g)=\pi_1(X_r(N_g)),\qquad
B_r(N_g)=\pi_1(X_r(N_g)/S_r)
$$
for the pure and full braid groups, respectively.
We use the same notation for configuration spaces and braid
groups of a disc, using its interior when the disc is closed.
We put
$$
 \nu=\mu^3|_{X_3(N_g)},\qquad
 \cN=\Homeo_0(N_g,\mu).
$$
Since $\mu$ is nonatomic and $\mu(N_g)=1$, the measure $\nu$
also has total mass one.
Choose measurable closing paths in $X_3(N_g)$ from a fixed
base configuration. Applying an isotopy to all three points
and closing the resulting path, with the same inverse-loop
convention as for $\gamma$, defines the trajectory cocycle
$$
\gamma_{N_g,3}:\cN\times X_3(N_g)\longrightarrow P_3(N_g)\subset B_3(N_g).
$$
The pure braid groups $P_r(N_g)$ are center-free for every
$r\ge1$; see \cite[proof of Proposition~1.6]{ParisRolfsen1999}.
Thus $\gamma_{N_g,3}$ is independent of the chosen isotopy.

We define the cochain map
$$
\Gamma'_{N_g,3}:C_b^d(B_3(N_g))^{B_3(N_g)}\longrightarrow C_b^d(\cN)^{\cN}
$$
by \eqref{eq:product-transfer}, replacing
$M^k,\mu^k,\gamma_k$ by
$X_3(N_g),\nu,\gamma_{N_g,3}$, respectively;
see \cite{Kimura2025}.
Following \cite[Sections~4-5]{Brandenbursky2026}, we use the notation
$$
\Gamma_{N_g,3}^d:\rHb^d(B_3(N_g))\longrightarrow\rHb^d(\cN)
$$
for the induced reduced transfer.
For $m\ge0$, the product space $X_3(N_g)^m\times N_g$, with
measure $\nu^m\times\mu$, carries the diagonal action of $\cN$.
The product of the three-point cocycles on the first $m$
factors and the one-point cocycle on the last factor takes
values in $B_3(N_g)^m\times\pi_1(N_g)$. Integrating bounded
cochains against this product cocycle defines the mixed transfer used in
Section~\ref{sec:surfaces}. All cochains integrated in these
configuration-space constructions are bounded.

\section{Proof of Theorem 1 in dimension two}\label{sec:surfaces}

Let $g>2$, $G=\pi_1(N_g)$ and  
$\overline\Gamma_{b,1}^{\,q}$ denotes the one-point transfer on $N_g$.

\emph{The one-point classes.}
We briefly recall the construction of Brandenbursky--Menashe
\cite{BrandenburskyMenashe2026} and the proof of their main
theorem, which give the degree-two and degree-three classes
used below.
Choose an oriented one-holed torus $Q\cong\Sigma_{1,1}\subset N_g$
such that $N_g\setminus\operatorname{int}Q\cong N_{g-2,1}$.
Here $N_{g-2,1}$ is the compact non-orientable surface of genus
$g-2$ with one boundary component. Choose a common basepoint
for the meridian and longitude of $Q$ and a path from $x_0$
to it. Let
$i:\F\hookrightarrow\pi_1(N_g)$ send $a,b$ to their based
homotopy classes. Then
$$
\pi_1(N_g)=\F*_{\Z}\pi_1(N_{g-2,1}).
$$
For $q=2,3$, the isometric section theorem of
\cite{BucherEtAl2014} gives isometric linear sections of restriction
$$
J_q:\rHb^q(\F)\longrightarrow\rHb^q(\pi_1(N_g)),\qquad i^*J_q=\id;
$$
see \cite[Proposition~4.10]{BrandenburskyMenashe2026}.
Since $N_g$ is closed, aspherical and non-orientable,
\begin{equation}\label{eq:surface-ordinary}
 H^q(\pi_1(N_g);\R)=0\qquad(q\ge2).
\end{equation}
Thus $J_q(v)\in\rEH^q(\pi_1(N_g))$ for every
$v\in\rHb^q(\F)$. Put
$$
\Theta_q:\rHb^q(\F)\longrightarrow\rEH^q(\cN),\qquad
\Theta_q(v)=\overline\Gamma_{b,1}^{\,q}(J_q(v)).
$$

For sufficiently small $\delta>0$, let
$s_\delta:\F\to\cN$ be the annular pushing homomorphisms of
\cite[Lemma~4.12]{BrandenburskyMenashe2026}.
Their defining isotopies are supported in annuli about the
chosen meridian and longitude of $Q$, with transition collars
whose measure tends to zero as $\delta\to0$.
Choose the pushing directions so that, for almost every
$x$ in the overlap of the two annuli outside their
transition collars, there is an element
$u_x\in\pi_1(N_g)$ such that
$$
\gamma(s_\delta(w),x)=u_x\,i(w)\,u_x^{-1}\qquad\text{for every }w\in\F.
$$
On $Q$, the density defining $\mu$ is a positive area form,
so the boundary version of Moser's theorem
\cite{Moser1965,Banyaga1974} gives this local construction.
There is a constant $\kappa>0$ such that, for $q=2,3$ and
every fixed $v\in\rHb^q(\F)$,
\begin{equation}\label{eq:one-point-limit}
 \norm{s_\delta^*\Theta_q(v)-\kappa v}\longrightarrow0
 \qquad(\delta\to0).
\end{equation}
The maps and the constant are independent of $q$ and $v$.
If $\Theta_q(v)=0$, then \eqref{eq:one-point-limit} gives
$\norm{\kappa v}=0$. Since we work in reduced bounded cohomology
and $\kappa>0$, this implies $v=0$, and thus $\Theta_q$ is injective.

\emph{Restriction to disc braid groups.}
Let $\Mod(N_g,n)$ denote the full mapping class group of $N_g$
with $n$ marked points.
Fix an embedded oriented disc $D\subset N_g$ whose interior
contains these points. Extension by the identity gives an injective homomorphism
$$
 \iota_D:B_n(D)\hookrightarrow\Mod(N_g,n).
$$
Indeed, this map is the composition of the homomorphism
$B_n(D)\to B_n(N_g)$ induced by the inclusion
$D\hookrightarrow N_g$, which is injective by
\cite[Theorem~2.3]{ParisRolfsen1999}, and the point-pushing
homomorphism $B_n(N_g)\to\Mod(N_g,n)$, which is injective
for $g>2$ by
\cite[Theorem~1.1, diagram~(1.4)]{GoncalvesGuaschiMaldonado2018}.

\begin{lemma}\label{lem:infinite-disc-restriction}
Let $g>2$ and $n\ge3$. Then the image of the restriction map
$$
\iota_D^*:EH_b^2(\Mod(N_g,n))\longrightarrow EH_b^2(B_n(D))
$$
is infinite dimensional. The same assertion holds for reduced
exact bounded cohomology.
\end{lemma}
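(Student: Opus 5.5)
We plan to pass to the orientation double cover and use known results for orientable surfaces. Let $p:\Sigma_{g-1}\to N_g$ be the orientation double cover with deck involution $\tau$. Fix the $n$ marked points $P\subset\operatorname{int}D$. Since $D$ is a disc, $p^{-1}(D)=\widetilde D\sqcup\tau\widetilde D$, and $p^{-1}(P)$ consists of $2n$ points, $n$ in each of $\widetilde D$ and $\tau\widetilde D$. Every mapping class of $(N_g,P)$ has a unique orientation-preserving lift to $(\Sigma_{g-1},p^{-1}(P))$ commuting with $\tau$. This gives a homomorphism
$$
\Lambda:\Mod(N_g,n)\longrightarrow\Mod(\Sigma_{g-1},2n).
$$
On $\iota_D(B_n(D))$ it acts as follows: a braid $\beta$ goes to $\beta$ on $\widetilde D$ times its mirror $\tau\beta\tau$ on $\tau\widetilde D$. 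Thus $\Lambda\circ\iota_D$ factors as
$$
B_n(D)\xrightarrow{\ \beta\mapsto(\beta,\bar\beta)\ }B_n(\widetilde D)\times B_n(\tau\widetilde D)\longrightarrow\Mod(\Sigma_{g-1},2n),
$$
where $\bar\beta$ is the mirror image of $\beta$. Mirroring is the automorphism of $B_n$ sending each $\sigma_i\mapsto\sigma_i^{-1}$.

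Next we need a supply of quasimorphisms on $\Mod(\Sigma_{g-1},2n)$ whose restrictions to $B_n(\widetilde D)$ span an infinite-dimensional space. Pulled back along $\Lambda$, these are homogeneous quasimorphisms on $\Mod(N_g,n)$, hence exact classes in $EH_b^2$. Here $g-1\ge2$. For such a supply we would use quasimorphisms of Bestvina--Fujiwara type coming from the action of $\Mod(\Sigma_{g-1},2n)$ on the curve complex. Alternatively, and more concretely, we would use the Gambaudo--Ghys/Brandenbursky construction. Take a homogeneous quasimorphism $\psi$ on $B_n$ and average it over configurations; the averaged quasimorphism restricts to the disc subgroup as a nonzero multiple of $\psi$ plus a homomorphism. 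The simplest route, however, is to choose quasimorphisms supported in a single disc. Fix $\psi$ from the infinite-dimensional space $Q(B_n)/\mathrm{Hom}$, which is infinite dimensional for $n\ge3$ by Bestvina--Fujiwara applied to $B_n/Z\cong\Mod(S_{0,n+1})$ relative to a boundary point. Then produce $\widehat\psi$ on $\Mod(\Sigma_{g-1},2n)$ whose restriction to $B_n(\widetilde D)\times B_n(\tau\widetilde D)$ is $\psi\circ\mathrm{pr}_1$ modulo homomorphisms and bounded functions. Pulling back along $\iota_D$ then gives $\psi$ up to a homomorphism.

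Existence of such $\widehat\psi$ is the main obstacle. Our first attempt would use the curve-complex approach directly on $\Mod(N_g,n)$. Bestvina--Bromberg--Fujiwara projection complexes show that the curve complex of the non-orientable punctured surface is hyperbolic and that the action is WPD. The subgroup $\iota_D(B_n(D))$ contains pseudo-Anosov elements of the subsurface $D\setminus P$. Using the subsurface projection to $D\setminus P$, viewed as an essential subsurface of $N_g\setminus P$ when $n\ge3$, one gets an action of $\Mod(N_g,n)$ on a quasi-tree of curve graphs. The group $B_n(D)$, modulo center, acts on the factor $\mathcal C(D\setminus P)$ through its faithful action. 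Bestvina--Fujiwara counting quasimorphisms built from independent pseudo-Anosov braids $\beta_1,\beta_2,\dots$ in $B_n(D)$ then give quasimorphisms on $\Mod(N_g,n)$. They take unbounded, linearly independent values on powers of the $\beta_j$. The independence modulo homomorphisms is verified on $B_n(D)$ by evaluating on suitable commutator-type or pure elements, where homomorphisms are controlled since $H^1(B_n;\R)=\R$. This shows $\iota_D^*$ has infinite-dimensional image in $EH_b^2(B_n(D))$.

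For the reduced statement, the plan is to note that in degree two the seminorm vanishes only on the zero class. Indeed, $\norm{[\delta\varphi]}=0$ forces $\varphi$ to be bounded, so a homogeneous $\varphi$ must be $0$. Hence $EH_b^2=\rEH^2$ for both groups, and the same image is infinite dimensional.
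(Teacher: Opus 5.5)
\textbf{There is a genuine gap.} Your proposal never deals with mirror symmetry, which is what makes the non-orientable case hard.

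Slide $D$ once around a one-sided loop. This gives a class $c\in\Mod(N_g,n)$ with $c(D)=D$ that reverses the orientation of $D$. So $c\,\iota_D(\beta)\,c^{-1}$ is $\iota_D$ of a conjugate of the mirror braid $\bar\beta$. Hence, for every homogeneous quasimorphism $\phi$ on $\Mod(N_g,n)$, the restriction $\phi\circ\iota_D$ is mirror invariant. In particular it vanishes on every braid $\beta$ for which $\bar\beta$ is conjugate to $\beta^{-1}$.

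This makes your first route impossible, not merely hard. The lift $\Lambda(c)$ is orientation-preserving and exchanges $\widetilde D$ and $\tau\widetilde D$. So it conjugates $B_n(\widetilde D)\times1$ onto $1\times B_n(\tau\widetilde D)$ by an isomorphism $x\mapsto x^\sharp$. Conjugation invariance gives $\widehat\psi(x,1)=\widehat\psi(1,x^\sharp)$. If the restriction of $\widehat\psi$ were $\psi\circ\mathrm{pr}_1$ plus a homomorphism, the right-hand side would be a homomorphism in $x$, and then $\psi$ would be a homomorphism too. So no such $\widehat\psi$ exists for any nontrivial $\psi$.

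The same phenomenon breaks the central claim of your second route, that counting quasimorphisms attached to arbitrary independent pseudo-Anosov braids take nonzero, linearly independent values. Take $\beta=\sigma_1^2\sigma_2^{-1}$:
\begin{itemize}
\item Its projective matrix is $[R^2L]$, of trace $4$, so $\beta$ is pseudo-Anosov.
\item It is not conjugate to its inverse in $B_3$, since $[\beta^{-1}]$ is conjugate in $\operatorname{PSL}(2,\Z)$ to the different cyclic word $[RL^2]$.
\item Yet $\bar\beta=\sigma_1^{-2}\sigma_2=\sigma_1^{-2}\beta^{-1}\sigma_1^{2}$. So $\iota_D(\beta)$ is conjugate to its inverse in $\Mod(N_g,n)$, and every homogeneous quasimorphism vanishes on it.
\end{itemize}
Bestvina--Fujiwara type quasimorphisms can only detect elements that are not conjugate to their inverses in the whole group. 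Separating a family also requires that no nonzero powers of distinct members be conjugate. In $\Mod(N_g,n)$ both conditions must hold against orientation-reversing conjugacies of the disc, and you neither state nor verify them.

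\textbf{What the paper supplies.} The missing ingredient is a specific family of braids, with a proof that it satisfies these conditions up to orientation-reversing conjugacy. This is the bulk of the paper's proof.
\begin{itemize}
\item It takes $v_r=u_r^6(\Delta_3^2)^{r-2}$, which has exponent sum zero, so homomorphisms vanish on it. This part of your independence argument survives.
\item It sends $u_r\mapsto[M_r]$ with $M_r=LRL^rR^2$. It then uses the Baake--Roberts normal form in $\operatorname{PGL}(2,\Z)$, which accounts for orientation-reversing homeomorphisms of the three-punctured disc. Via the block sequence $(-1,1,-r,2)$ it shows that no nonzero power of $[M_r]$ is conjugate to its inverse, and that nonzero powers of $[M_r]$ and $[M_s]$ are never conjugate for $r\ne s$.
\item It works in $\Mod^+(\Sigma_{g-1},2n)$ through $\Lambda$. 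There $\Lambda(\iota_D(v_r))$ has two pseudo-Anosov components $h_r^\pm$ exchanged by the deck reflection. It shows that no positive power of $h_r^+$ is conjugate to a negative power of $h_r^-$; composing a putative conjugator with $\tau$ reduces this to the matrix statement.
\item Only then does it apply the construction of \cite{BBF2016} together with the chirality criterion \cite[Lemma~4.11]{BGKM2016}.
\end{itemize}
Working in the orientable cover also avoids developing projection complexes and WPD arguments for non-orientable surfaces, which your second route takes for granted.

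Finally, your step for the reduced statement reaches the right conclusion, but zero seminorm forces $\varphi$ to be a homomorphism, not to be bounded.
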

\begin{proof}
Choose a smaller disc $E\subset D$ containing exactly three
marked points. Regard $B_3(E)$ as a subgroup of $B_n(D)$ by
fixing the remaining strands. Let $\sigma_1,\sigma_2$ be its
standard Artin generators, and let $\Delta_3^2$ denote the
full twist. For every integer $r\ge3$, put
\begin{equation}\label{eq:braid-family}
\Delta_3^2=(\sigma_1\sigma_2)^3,\qquad u_r=\sigma_2^{-1}\sigma_1\sigma_2^{-r}\sigma_1^2,\qquad
v_r=u_r^6(\Delta_3^2)^{\,r-2}.
\end{equation}
The exponent-sum homomorphism $B_3(E)\to\Z$, which sends
each Artin generator to $1$, is the abelianization.
The exponent sums of $u_r$ and $\Delta_3^2$ are $2-r$ and
$6$, respectively. Hence the exponent sum of $v_r$ is zero, and therefore
\begin{equation}\label{eq:braid-commutators}
 v_r\in[B_3(E),B_3(E)]\subset[B_n(D),B_n(D)].
\end{equation}
Put
$$
R=\begin{pmatrix}1&1\\0&1\end{pmatrix},\qquad
L=\begin{pmatrix}1&0\\1&1\end{pmatrix},
$$
and write $[A]$ for the projective class of a matrix $A$.
Use the identification
$B_3(E)/\langle\Delta_3^2\rangle\cong\operatorname{PSL}(2,\Z)$
given by $\sigma_1\mapsto[R]$ and $\sigma_2\mapsto[L^{-1}]$.
Then $u_r$ maps to $[M_r]$, where
$$
 M_r=LRL^rR^2
   =\begin{pmatrix}r+1&2r+3\\2r+1&4r+4\end{pmatrix}.
$$
Its trace is $5(r+1)>2$, so $v_r$ is pseudo-Anosov on the
three-punctured disc $E$, with projective matrix $[M_r]^6$.
Twists about $\partial E$ are powers of $\Delta_3^2$ and
therefore have trivial image in this quotient. A conjugacy
by a homeomorphism of the marked disc, possibly reversing
orientation, induces a conjugacy of projective matrices in
$\operatorname{PGL}(2,\Z)$.

We first establish two properties of the projective matrices
$[M_r]$ in $\operatorname{PGL}(2,\Z)$: no nonzero power of
$[M_r]$ is conjugate to its inverse, and, if $r\ne s$, no
nonzero power of $[M_r]$ is conjugate to a nonzero power of
$[M_s]$.

Decompose the cyclic word into maximal blocks of consecutive
identical letters. To each block $L^a$ assign the integer $-a$,
and to each block $R^b$ assign the integer $b$, keeping the
order of the blocks. For $M_r=LRL^rR^2$, the resulting
sequence is $(-1,1,-r,2)$.
By the normal form of
\cite[Section~3.1.1, Propositions~3--4]{BaakeRoberts1997},
conjugacy to the inverse in $\operatorname{PGL}(2,\Z)$
requires this cyclic sequence to agree with its reversal or
its negative reversal. Neither is possible. The negative
block of length $r$ is preceded by a positive block of length
$1$ and followed by one of length $2$, whereas reversal
interchanges these neighbors. Negative reversal has a positive
block of length $r$, which the original sequence lacks.
The same argument applies to every repetition of the sequence.
Thus no nonzero power of $[M_r]$ is conjugate to its inverse.

The largest absolute block length is $r$. It is unchanged by
repetition, cyclic permutation, reversal, or change of all
signs. Consequently, if $r\ne s$, no nonzero powers of
$[M_r]$ and $[M_s]$ are conjugate in
$\operatorname{PGL}(2,\Z)$.

Let $p:\Sigma_{g-1}\to N_g$ be the orientation cover. Its
nontrivial deck transformation reverses orientation and will
be called the deck reflection. Write
$\Mod^+(\Sigma_{g-1},2n)$ for the group of orientation-preserving
mapping classes of the cover with the $2n$ lifted marked
points, which may be permuted. The canonical
orientation-preserving lift gives an injective homomorphism
$$
 \Lambda:\Mod(N_g,n)\hookrightarrow
                  \Mod^+(\Sigma_{g-1},2n);
$$
see \cite[Theorem~1.1]{GoncalvesGuaschiMaldonado2018}.

A suitable positive power of a mapping class admits a
Nielsen--Thurston decomposition as a commuting product of
pseudo-Anosov mapping classes supported on subsurfaces and
powers of Dehn twists; we call these factors components.
See \cite[Section~4]{BBF2016}.
For the components below, conjugacy is taken in
$\Mod^+(\Sigma_{g-1},2n)$. Following the description preceding
\cite[Lemma~4.11]{BGKM2016},
a component is \emph{chiral} if it is not conjugate to its
inverse. We will verify that every positive power of each
pseudo-Anosov component below is chiral.
By \cite[Lemma~4.11]{BGKM2016}, a mapping class is detected
by a homogeneous quasimorphism if the Nielsen--Thurston
decomposition of a suitable positive power is nonempty,
all its components are chiral, and no nonzero powers of
distinct components are conjugate.

Let $f_r=\Lambda(\iota_D(v_r))$ and
$E^+,E^-$ be the two lifts of $E$, labeled so that
$p|_{E^+}:E^+\to E$ is orientation-preserving.
The mapping class $f_r$ preserves each of these discs and
is the identity on their complement.
Its Nielsen--Thurston decomposition has two pseudo-Anosov
components, denoted by $h_r^+,h_r^-$, supported on $E^+,E^-$,
respectively, together with non-zero powers of Dehn twists about their
boundaries. Let $\tau$ denote the deck reflection.
Since $f_r$ commutes with $\tau$ and $\tau(E^+)=E^-$,
we choose the pseudo-Anosov components so that
$h_r^-=\tau h_r^+\tau^{-1}$.
Thus their restrictions to $E^+$ and $E^-$ are conjugate
by the orientation-reversing homeomorphism
$\tau|_{E^+}:E^+\to E^-$. For a pseudo-Anosov component $h$, let
$\lambda(h)>1$ denote the stretch factor of its restriction
to its supporting disc. Thus the two components have a
common stretch factor
$$
 \lambda(h_r^+)=\lambda(h_r^-)=\lambda_r>1.
$$
A conjugacy between nonzero powers of any two members of the
family $\{h_r^+,h_r^-:r\ge3\}$ carries the
boundary of one supporting disc to the boundary of the other,
up to isotopy. Indeed, these boundaries are their canonical
reduction curves, which are preserved by conjugacy; see
\cite[Section~4]{BBF2016}.
The disc side cannot be exchanged with the complementary
surface of positive genus. Thus, after an isotopy, the
conjugating homeomorphism restricts to a homeomorphism
between the supporting discs. In particular, when the
supporting disc is the same on both sides, the conjugacy
of the restrictions is induced by a homeomorphism of that disc.

Under the orientation-preserving identification $E^+\to E$,
the restriction of $f_r$ is $v_r$. Passing to
the component $h_r^+$ removes only boundary twists, which
are trivial in $B_3(E)/\langle\Delta_3^2\rangle$.
Since
$$
v_r=u_r^6(\Delta_3^2)^{r-2}\longmapsto [M_r]^6,
$$
the restriction of $h_r^+$ has projective matrix
$[M_r]^6$. The deck reflection conjugates the restriction
of $h_r^+$ to that of $h_r^-$.
If $(h_r^+)^a$ were conjugate to $(h_r^+)^{-a}$ for
some $a\ge1$, restricting the conjugacy to $E^+$ would
make $[M_r]^{6a}$ conjugate to its inverse in
$\operatorname{PGL}(2,\Z)$, contrary to the matrix argument
above. The same argument on $E^-$, identified with $E^+$
by the deck reflection, applies to $h_r^-$.
Thus every positive power of each $h_r^\pm$ is chiral.

Suppose that $(h_r^+)^a$ is conjugate to $(h_r^-)^{-b}$,
where $a,b\ge1$. Stretch factors are invariant under
conjugacy, and
$$
\lambda(h^j)=\lambda(h)^{|j|}\qquad(j\ne0).
$$
Consequently,
$$
\lambda_r^a=\lambda\bigl((h_r^+)^a\bigr)=\lambda\bigl((h_r^-)^{-b}\bigr)=\lambda_r^b.
$$
Since $\log\lambda_r>0$, we obtain $a=b$.
Let $H$ be an orientation-preserving homeomorphism
representing this conjugacy, so that
$$
H(h_r^+)^aH^{-1}=(h_r^-)^{-a}.
$$
Here and below, the equalities are equalities of mapping
classes. As explained above, after an isotopy we may assume
that $H(E^+)=E^-$.
The deck reflection $\tau$ maps $E^-$ back to $E^+$.
Consequently, $\tau H$ preserves $E^+$.
Our choice of the components gives
$\tau h_r^-\tau^{-1}=h_r^+$. Therefore
$$
(\tau H)(h_r^+)^a(\tau H)^{-1}=\tau\bigl(H(h_r^+)^aH^{-1}\bigr)\tau^{-1}
=\tau(h_r^-)^{-a}\tau^{-1}=(h_r^+)^{-a}.
$$
Thus the restriction of $\tau H$ to $E^+$ would conjugate
the restriction of $(h_r^+)^a$ to its inverse.

We now apply the projective matrix representation.
Under the identification $E^+\to E$, the restriction of
$h_r^+$ has matrix $[M_r]^6$, so its $a$-th power has
matrix $[M_r]^{6a}$.
The projective matrix representation extends to
orientation-reversing homeomorphisms of the three-punctured
disc, with values in $\operatorname{PGL}(2,\Z)$.
Since $\tau H$ reverses orientation, let
$C\in\operatorname{PGL}(2,\Z)$ denote the projective matrix
of its restriction to $E^+$.
The preceding conjugacy would then give
$$
C[M_r]^{6a}C^{-1}=[M_r]^{-6a}.
$$
This contradicts the fact proved above that no nonzero
power of $[M_r]$ is conjugate to its inverse in
$\operatorname{PGL}(2,\Z)$.
Thus no positive power of one component is conjugate
to a negative power of the other.

Fix $r\ge3$. The construction in the proof of
\cite[Theorem~4.2]{BBF2016}, together with
\cite[Proposition~4.4]{BBF2016}, gives a homogeneous
quasimorphism $\varphi$ on $\Mod^+(\Sigma_{g-1},2n)$ such that
$\varphi(h_r^+)\ne0$ and $\varphi$ vanishes on every
boundary-twist factor in the decomposition of $f_r$.
If $h_r^+$ and $h_r^-$ have no conjugate nonzero powers, we may
also choose $\varphi(h_r^-)=0$.
If nonzero powers of $h_r^+$ and $h_r^-$ are conjugate,
the preceding argument shows that their exponents have the
same sign. Equality of stretch factors then
shows that their exponents have the same absolute value.
Thus $(h_r^+)^a$ and $(h_r^-)^a$ are conjugate for some
$a\ge1$, and homogeneity and conjugacy invariance give
$\varphi(h_r^+)=\varphi(h_r^-)\ne0$.
Since homogeneous quasimorphisms are additive on commuting
elements, in both cases
$$
\varphi(f_r)=\varphi(h_r^+)+\varphi(h_r^-)\ne0.
$$

Similarly, a conjugacy between nonzero powers of pseudo-Anosov
components belonging to $r$ and $s$ restricts to a homeomorphism
between their supporting discs. Use the covering maps to identify
both discs with $E$. The resulting homeomorphism of $E$ may
reverse orientation, but it would give a conjugacy in
$\operatorname{PGL}(2,\Z)$ between nonzero powers of
$[M_r]$ and $[M_s]$. When $r\ne s$, this contradicts the
different maximal block lengths established above.

Fix $N\ge3$. Apply \cite[Proposition~4.4]{BBF2016} to the
finite collection of components of $f_3,\ldots,f_N$.
For each $i$ with $3\le i\le N$, we obtain a homogeneous
quasimorphism $\varphi_i$ on $\Mod^+(\Sigma_{g-1},2n)$
which is nonzero on $h_i^+$ and vanishes on
all boundary-twist factors and on $h_j^\pm$ for $j\ne i$.
When $h_i^+$ and $h_i^-$ have no conjugate nonzero powers,
we also choose $\varphi_i(h_i^-)=0$.
Otherwise, the preceding argument gives
$\varphi_i(h_i^-)=\varphi_i(h_i^+)$.
Additivity on commuting elements therefore gives
$\varphi_i(f_i)\ne0$ and $\varphi_i(f_j)=0$ for $j\ne i$.
After rescaling, we have
$$
\varphi_i(f_j)=\delta_{ij}\qquad(3\le i,j\le N).
$$
Put $q_i=\varphi_i\circ\Lambda$. Then
$[\delta q_i]\in EH_b^2(\Mod(N_g,n))$.
Suppose
$$
\sum_{i=3}^N a_i\iota_D^*[\delta q_i]=0.
$$
The homogeneous quasimorphism
$\sum_{i=3}^N a_i(q_i\circ\iota_D)$ represents zero in exact
bounded cohomology and hence is a homomorphism.
By \eqref{eq:braid-commutators}, it vanishes on every $v_j$.
Evaluating gives
$$
0=\sum_{i=3}^N a_iq_i(\iota_D(v_j))=a_j\qquad(3\le j\le N).
$$
The restricted classes are linearly independent.
Since $N$ is arbitrary, the image is infinite dimensional.
In degree two there are no nontrivial bounded cohomology classes
of zero seminorm; see \cite[Corollary~2.7]{MatsumotoMorita1985}.
Hence the reduced assertion follows.
\end{proof}

\emph{A class detected in a disc.}
Choose an oriented disc $D_0\subset N_g\setminus Q$ and a base
configuration of three points in its interior. In this disc, let
$\sigma_1,\sigma_2$ denote the standard Artin generators of
$B_3(D_0)$.
Let $\iota:B_3(D_0)\hookrightarrow B_3(N_g)$ be the local inclusion,
and put
$$
\jmath:\F\hookrightarrow B_3(D_0),\qquad\jmath(a)=\sigma_1^2,\quad \jmath(b)=\sigma_2^2.
$$
The image of $\jmath$ lies in $P_3(D_0)$. The subgroup
generated by $\sigma_1^2,\sigma_2^2$ is the free factor in
$P_3(D_0)\cong\F\times\Z$.

\begin{lemma}\label{lem:braid-class}
There is a homogeneous quasimorphism $q_0:B_3(N_g)\to\R$ such that
$$
\eta=\jmath^*\iota^*[\delta q_0]\ne0\quad\text{in }\rHb^2(\F).
$$
\end{lemma}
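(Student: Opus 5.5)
The plan is to obtain $q_0$ by pulling back a quasimorphism from the mapping class group, as in Lemma~\ref{lem:infinite-disc-restriction}. Then we check nontriviality on $\jmath(\F)$ by exhibiting an element of $\jmath(\F)$ that lies in the commutator subgroup of $\F$ and on which the quasimorphism is nonzero.

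\textbf{Setting up the pullback.} Let $\mathcal P:B_3(N_g)\to\Mod(N_g,3)$ be the injective point-pushing homomorphism, and let $D=D_0$. Then $\mathcal P\circ\iota=\iota_{D_0}$ on $B_3(D_0)$. The proof of Lemma~\ref{lem:infinite-disc-restriction}, with $n=3$ and $E=D_0$, gives a homogeneous quasimorphism $\varphi$ on $\Mod^+(\Sigma_{g-1},6)$ and the quasimorphism $q=\varphi\circ\Lambda$ on $\Mod(N_g,3)$. We put $q_0=q\circ\mathcal P$; it is homogeneous and $\iota^*[\delta q_0]=[\delta(q\circ\iota_{D_0})]$. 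The whole task is to choose $\varphi$ so that $q_0\circ\iota\circ\jmath$ is not a homomorphism on $\F$. In degree two, zero seminorm forces zero (\cite[Corollary~2.7]{MatsumotoMorita1985}), so this is equivalent to $\eta\neq0$ in $\rHb^2(\F)$.

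\textbf{Choosing a detecting element.} It suffices to find $w\in[\F,\F]$ such that $\beta=\jmath(w)$ is pseudo-Anosov on the three-punctured disc $D_0$ and satisfies the chirality conditions used for $v_r$. Since $w$ is a product of commutators in $\F$, any homomorphism $\F\to\R$ vanishes on $w$. Under $B_3/\langle\Delta_3^2\rangle\cong\operatorname{PSL}(2,\Z)$, the element $\sigma_1^2$ maps to $[R^2]$ and $\sigma_2^2$ maps to $[L^{-2}]$. These generate a free subgroup of the level-$2$ congruence subgroup. A natural candidate is
$$w=a^{p}b^{-1}a^{-p}b\,a\,b^{-1}a^{-1}b^{s},$$
adjusted so that the total exponents in $a$ and in $b$ are both zero. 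Its image is a product of blocks $R^{2m}$, $L^{2n}$. We then need the cyclic block sequence to be hyperbolic (trace $>2$), not equal to its reversal or its negative reversal, and, after squaring if necessary, to satisfy the same no-self-inverse-conjugacy property in $\operatorname{PGL}(2,\Z)$ via the Baake--Roberts normal form. A simpler option is $w=[a,b^{-1}]^{\ell}$-type words with unequal block lengths, say $a^2b^{-1}a^{-1}b\,a^{-1}\cdots$. We would pick one concrete word whose block sequence has a unique maximal-length block with distinct neighbours, exactly as in the argument for $M_r$.

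\textbf{Transferring detection to $B_3(N_g)$.} With such a $\beta$, we repeat the argument of Lemma~\ref{lem:infinite-disc-restriction} verbatim. The lift $\Lambda(\iota_{D_0}(\beta))$ has two pseudo-Anosov components on $D_0^\pm$, together with boundary twists. The components are chiral, and a positive power of one is never conjugate to a negative power of the other. \cite[Theorem~4.2, Proposition~4.4]{BBF2016} then produce $\varphi$ with $\varphi(\Lambda\iota_{D_0}(\beta))\neq0$. Consequently $q_0(\iota\jmath(w))\neq0$ while $w\in[\F,\F]$, so $q_0\circ\iota\circ\jmath$ is a homogeneous quasimorphism that is not a homomorphism, and hence $\eta\neq0$.

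\textbf{Main obstacle.} The hard step is the combinatorial verification of the matrix properties for the chosen word. The image lies in the subgroup generated by $R^2$ and $L^2$, and the lifted word must be checked not to be conjugate to its inverse in $\operatorname{PGL}(2,\Z)$. This includes conjugacy by orientation-reversing maps, which act by reversal and sign change of the block sequence. Everything else is inherited from the earlier lemma.
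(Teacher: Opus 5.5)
Your reduction is sound: if some $w\in[\F,\F]$ satisfies $q_0(\iota\jmath(w))\neq0$, then $q_0\circ\iota\circ\jmath$ is not a homomorphism and $\eta\neq0$. The gap is that you never produce such a $w$. The step you defer as the ``main obstacle'' is the whole content of your argument, and it cannot be copied from the proof of Lemma~\ref{lem:infinite-disc-restriction}.

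\textbf{Why the step is not routine.} For $w\neq1$ in $[\F,\F]$, the image under $a\mapsto[R^2]$, $b\mapsto[L^{-2}]$ contains both positive and negative powers of $R^2$. So the word you get is not a positive word in $L,R$. Its Baake--Roberts normal form has to be computed after conjugating to a positive word; it cannot be read off from the blocks $R^{2m},L^{2n}$ as was done for $M_r$.

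The natural candidates actually fail:
\begin{enumerate}
\item For $w=[a,b]$, the half twist $\Delta_3=\sigma_1\sigma_2\sigma_1$ exchanges $\sigma_1$ and $\sigma_2$ by conjugation. Hence it conjugates $\jmath(w)=[\sigma_1^2,\sigma_2^2]$ to $[\sigma_2^2,\sigma_1^2]=\jmath(w)^{-1}$ inside $B_3(D_0)$. Every homogeneous quasimorphism therefore vanishes on it.
\item For your family $w=[a,b^{-1}]^{\ell}$, the image is $[R^2,L^2]^{\ell}$. Since $JRJ^{-1}=L$ for $J=\bigl(\begin{smallmatrix}0&1\\1&0\end{smallmatrix}\bigr)$, the matrix $J$ conjugates $[R^2,L^2]^{\ell}$ to its inverse in $\operatorname{PGL}(2,\Z)$. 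This is exactly the orientation-reversing conjugacy that the $h^+$ versus $h^-$ step must exclude, so the criterion of Lemma~\ref{lem:infinite-disc-restriction} fails.
\end{enumerate}
As written, the existence of a detecting $w$ is asserted, not proved.

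\textbf{The missing idea.} No new word is needed; the paper argues softly.
\begin{enumerate}
\item Lemma~\ref{lem:infinite-disc-restriction} with $n=3$ and $D=D_0$ already gives $q_0$ whose restriction to $B_3(D_0)$ is not a homomorphism. Thus $\iota^*[\delta q_0]\neq0$ in $\Hb^2(B_3(D_0))$.
\item Restriction to the finite-index subgroup $P_3(D_0)$ is injective in bounded cohomology, because composing it with the averaging transfer gives the identity.
\item $P_3(D_0)=\jmath(\F)\times\langle\Delta_3^2\rangle$. The full twist is central, and $P_3(D_0)/\langle\Delta_3^2\rangle$ maps isomorphically onto the level-two congruence subgroup of $\operatorname{PSL}(2,\Z)$, which is free on $[R^2],[L^{-2}]$.
\item The projection onto $\jmath(\F)$ has amenable kernel. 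So its pullback is an isomorphism on bounded cohomology, and its inverse is restriction to $\jmath(\F)$.
\end{enumerate}
Hence $\eta\neq0$, and \cite[Corollary~2.7]{MatsumotoMorita1985} gives nonvanishing in $\rHb^2(\F)$.

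Completing your route would yield more explicit information, namely a specific commutator in $\jmath(\F)$ detected by $q_0$. The price is exactly the $\operatorname{PGL}(2,\Z)$ computation you left open, and the examples above show it is delicate.
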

\begin{proof}
Apply Lemma~\ref{lem:infinite-disc-restriction} with $n=3$ and
$D=D_0$. It gives a homogeneous quasimorphism on $\Mod(N_g,3)$
whose restriction to $B_3(D_0)$ is not a homomorphism.
Pull it back by the point-pushing homomorphism
$B_3(N_g)\to\Mod(N_g,3)$, and denote the resulting homogeneous
quasimorphism by $q_0$. The composite
$$
B_3(D_0)\xrightarrow{\iota}B_3(N_g)\longrightarrow\Mod(N_g,3)
$$
is extension by the identity, so $\iota^*[\delta q_0]\ne0$.

Restriction to the finite-index subgroup $P_3(D_0)$ is injective
in bounded cohomology. The amenable direct factor in
$P_3(D_0)\cong\F\times\Z$ can then be removed isometrically.
Thus $\jmath^*\iota^*[\delta q_0]\ne0$.
In degree two there are no nontrivial bounded cohomology classes
of zero seminorm; see \cite[Corollary~2.7]{MatsumotoMorita1985}.
Hence this class remains nonzero after reduction.
\end{proof}

\begin{remark*}
A quasimorphism satisfying Lemma~\ref{lem:braid-class} can also
be constructed directly.
We identify $B_3(N_g)$ with its point-pushing subgroup of
$\Mod(N_g,3)$ and use the lift $\Lambda$ defined above.
Interpret the word $v_3=u_3^6\Delta_3^2$ from
\eqref{eq:braid-family} in $B_3(D_0)$, using the standard
Artin generators of this disc braid group.
The projective matrix of $u_3$ is
$[M_3]=\bigl[\begin{smallmatrix}4&9\\7&16\end{smallmatrix}\bigr]$.
No nonzero power of $[M_3]$ is conjugate to its inverse
in $\operatorname{PGL}(2,\Z)$; see
\cite[Example~2, Lemma~9 and Proposition~5]{BaakeRoberts1997}.

Applying the argument in the proof of
Lemma~\ref{lem:infinite-disc-restriction} to the single
braid $v_3$, we obtain a homogeneous quasimorphism
$\varphi$ on $\Mod^+(\Sigma_{g-1},6)$ such that
$$
\varphi\bigl(\Lambda(\iota(v_3))\bigr)\ne0.
$$
Let $q_0$ be the restriction of $\varphi\circ\Lambda$
to $B_3(N_g)$. Then $q_0(\iota(v_3))\ne0$.
Since $v_3\in[B_3(D_0),B_3(D_0)]$, the restriction of $q_0$
to $B_3(D_0)$ is not a homomorphism.
Restriction to $P_3(D_0)$ and removal of its central
$\Z$ factor, as in the proof of Lemma~\ref{lem:braid-class},
give
$\jmath^*\iota^*[\delta q_0]\ne0$ in $\rHb^2(\F)$.
\end{remark*}

\emph{The mixed product transfer.}
Let $\alpha\in\rHb^2(B_3(N_g))$ be the reduced class of
$\delta q_0$, and put
$$
A=\Gamma_{N_g,3}^2(\alpha)\in\rHb^2(\cN).
$$
Recall that $\eta=\jmath^*\iota^*\alpha\ne0\thinspace\text{in }\rHb^2(\F)$ by
Lemma~\ref{lem:braid-class}.
For $m\ge0$ and $q\in\{2,3\}$, define
\begin{equation}\label{eq:surface-map}
 \begin{aligned}
 \Phi_{m,q}&:\rHb^q(\F)\longrightarrow\rHb^{2m+q}(\cN),\\
 \Phi_{m,q}(v)&=A^{\smile m}\smile\Theta_q(v).
 \end{aligned}
\end{equation}
Here $A^{\smile0}$ is the degree-zero unit. This is the mixed
transfer of $\alpha^{\times m}\times J_q(v)$, a class on
$B_3(N_g)^m\times\pi_1(N_g)$, over $X_3(N_g)^m\times N_g$
with measure $\nu^m\times\mu$.
Indeed, the cup-product integrand factors over the independent
variables, and Fubini gives \eqref{eq:surface-map}.

For the testing argument fix $m\ge1$. Choose pairwise disjoint
discs $D_1,\ldots,D_m\Subset\operatorname{int}(D_0)$.
Choose $R>0$ sufficiently small.
Inside each $D_j$, choose two overlapping round discs
$D_{j,1},D_{j,2}$ of radius $R$ in a positively oriented
area coordinate chart, with closures in $D_j$, such that
their intersection and both exclusive regions have positive
measure. Label these three regions in the order
$D_{j,1}\setminus D_{j,2}$,
$D_{j,1}\cap D_{j,2}$,
$D_{j,2}\setminus D_{j,1}$.
Use a model configuration with one point in each region,
in the displayed order.

For sufficiently small $0<\eps<R$, choose a smooth function
$\chi_\eps:[0,R]\to[0,1]$ equal to one on $[0,R-\eps]$
and zero near $R$.
For $s=1,2$, use polar coordinates $(r,\varphi)$ about the
center of $D_{j,s}$, choosing the increasing angular direction
so that the inverse traced full rotation of the model
configuration represents $\sigma_s^2$.
As in \cite[Lemmas~15 and~21]{Brandenbursky2026}, let
$f_{j,s,\eps}$ be the time-one map of the rotation
$$
(r,\varphi)\longmapsto (r,\varphi+2\pi t\chi_\eps(r)),\qquad 0\le t\le1,
$$
on $D_{j,s}$, extended by the identity outside that disc.
The transition collars are the regions where
$R-\eps<r<R$; their total measure tends to zero
as $\eps\to0$.
Define
$$
\rho_{j,\eps}:\F\longrightarrow\cN,\qquad\rho_{j,\eps}(a)=f_{j,1,\eps},\quad
\rho_{j,\eps}(b)=f_{j,2,\eps}.
$$

Fix $j,\eps$, and let $\mathbf x\in X_3(N_g)$ have one point
in each of the three regions and no point in either transition
collar. Both generators fix $\mathbf x$. Hence,
$$
\F\longrightarrow B_3(N_g),\qquad w\longmapsto\gamma_{N_g,3}(\rho_{j,\eps}(w),\mathbf x)
$$
is a homomorphism. Order the points by their regions. Choose
an orientation-preserving identification of the pair
$D_{j,1},D_{j,2}$, together with these points, with the
standard pair of overlapping discs in $D_0$ surrounding base
points $1,2$ and $2,3$, respectively. The inverse trajectory
braids of the two rotations are then $\sigma_1^2$ and
$\sigma_2^2$. Restoring the original labels and using the
chosen closing paths conjugates both braids by the same
element. Thus there is $\beta_{\mathbf x}\in B_3(N_g)$
such that
$$
\gamma_{N_g,3}(\rho_{j,\eps}(w),\mathbf x)
=\beta_{\mathbf x}(\iota\circ\jmath)(w)\beta_{\mathbf x}^{-1}\qquad(w\in\F).
$$
For the fixed $j,\eps$, the braid $\beta_{\mathbf x}$ may
depend on $\mathbf x$ and the closing paths, but not on $w$;
see \cite[proofs of Lemmas~15 and~21]{Brandenbursky2026}.
Since inner automorphisms act trivially on bounded cohomology,
the pullback of $\alpha$ by this homomorphism is
$\jmath^*\iota^*\alpha=\eta\in\rHb^2(\F)$.

The images for different $j$ commute, and their supports are
disjoint from the support of $s_\delta$, which lies in $Q$.
Thus
\begin{equation}\label{eq:surface-testing-map}
 \begin{aligned}
 \rho_{\eps,\delta}&:\F^{m+1}\longrightarrow\cN,\\
 \rho_{\eps,\delta}(g_1,\ldots,g_m,h)
   &=\rho_{1,\eps}(g_1)\cdots\rho_{m,\eps}(g_m)s_\delta(h)
 \end{aligned}
\end{equation}
is a homomorphism; compare
\cite[equation~(63)]{Brandenbursky2026}.
Let $p_j:\F^{m+1}\to\F$ be the coordinate projections, and put
$$
\eta_{j,\eps}=\rho_{j,\eps}^*A,\qquad E_\delta=s_\delta^*A,\qquad v_\delta=s_\delta^*\Theta_q(v).
$$
Thus $\eta_{j,\eps},E_\delta\in\rHb^2(\F)$ and
$v_\delta\in\rHb^q(\F)$.
The degree-two splitting of
\cite[Lemma~22]{Brandenbursky2026} gives \eqref{eq:split}, and
\cite[Lemma~24]{Brandenbursky2026} gives \eqref{eq:local-seed}:
\begin{align}
\rho_{\eps,\delta}^*A
&=\sum_{j=1}^m p_j^*\eta_{j,\eps}+p_{m+1}^*E_\delta,\label{eq:split}\\
\rho_{\eps,\delta}^*\Theta_q(v)
&=p_{m+1}^*v_\delta.\label{eq:local-seed}
\end{align}
Let $\lambda_j=6\,\mu(D_{j,1}\setminus D_{j,2})\,\mu(D_{j,1}\cap D_{j,2})\,\mu(D_{j,2}\setminus D_{j,1})>0$.
By
\cite[Lemma~21]{Brandenbursky2026} and \eqref{eq:one-point-limit},
$$
\eta_{j,\eps}\longrightarrow\lambda_j\eta\quad(\eps\to0),\qquad
v_\delta\longrightarrow\kappa v\quad(\delta\to0)
$$
in reduced norm.
The defining isotopies of $\rho_{j,\eps}$ stay in
the contractible disc $D_j$, and hence give trivial one-point
trajectory classes. This is why the first $m$ factors do
not occur in \eqref{eq:local-seed}.

\emph{Two coordinates.}
For $m=1$, naturality of the cup product gives
$$
\rho_{\eps,\delta}^*\Phi_{1,q}(v)=\eta_{1,\eps}\times v_\delta+p_2^*(E_\delta\smile v_\delta).
$$
Let $\tau_1:\F^2\to\F^2$ be given by $\tau_1(g,h)=(1,h)$.
Applying $\id-\tau_1^*$ removes the second summand. Hence
$$
(\id-\tau_1^*)\rho_{\eps,\delta}^*\Phi_{1,q}(v)\longrightarrow\lambda_1\kappa\,\eta\times v
\qquad((\eps,\delta)\to(0,0)).
$$
If $\Phi_{1,q}(v)=0$, then $\eta\times v=0$.
Since $v\mapsto\eta\times v$ is injective by
Proposition~\ref{prop:products}, this gives $v=0$.

\emph{An arbitrary number of disc factors.}
For $1\le j\le m$, let $\tau_j:\F^{m+1}\to\F^{m+1}$ replace
coordinate $j$ by the identity, and put
$$
 \Pi_m=\prod_{j=1}^m(\id-\tau_j^*).
$$
We follow the calculation in
\cite[Section~5.1, equations~(68)--(70)]{Brandenbursky2026}. 
Expand \eqref{eq:surface-map} using
\eqref{eq:split}--\eqref{eq:local-seed}.
There are exactly $m$ degree-two factors to distribute.
A term using $E_\delta$, or assigning two factors to the
same disc coordinate, leaves another disc coordinate unused
and is sent to zero by $\Pi_m$.
The remaining terms assign one factor to each disc coordinate.
There are $m!$ such assignments, and their signs agree
because the factors have even degree. Thus
$$
\Pi_m\rho_{\eps,\delta}^*\Phi_{m,q}(v)=m!\,\eta_{1,\eps}\times\cdots\times\eta_{m,\eps}\times v_\delta.
$$
Passing to the limit in reduced norm as
$(\eps,\delta)\to(0,0)$ gives
\begin{equation}\label{eq:surface-limit}
\Pi_m\rho_{\eps,\delta}^*\Phi_{m,q}(v)\longrightarrow m!\kappa\Bigl(\prod_{j=1}^m\lambda_j\Bigr)\eta^{\times m}\times v.
\end{equation}

If $\Phi_{m,q}(v)=0$, the left-hand side of
\eqref{eq:surface-limit} is zero for every $\eps,\delta$.
Thus $\eta^{\times m}\times v=0$ in reduced bounded cohomology.
Proposition~\ref{prop:products} gives $v=0$.
For $m=0$, injectivity is \eqref{eq:one-point-limit}.

\emph{Exactness.}
By \eqref{eq:surface-ordinary} and the one-point comparison
identity, $\Theta_q(v)$ lies in $\rEH^q(\cN)$.
Cup products with exact classes are exact before reduction,
since comparison respects cup products. Passing to the reduced
groups shows that $\Phi_{m,q}(v)\in\rEH^{2m+q}(\cN)$.
Thus we have injective maps
$$
\Phi_{m,q}:\rHb^q(\F)\longrightarrow\rEH^{2m+q}(\cN),\qquad m\ge0,\quad q=2,3.
$$
Every integer $d\ge2$ has the form $d=2m+q$.
Equation~\eqref{eq:free-dimensions} proves
Theorem~\ref{thm:main} in dimension two for the measure
$\mu$ induced by the smooth positive density fixed above.
Lemma~\ref{lem:good-measures} gives the assertion for every
good measure.

\section{Proof of Theorem 1 in dimension greater than two}
\label{sec:higher}

\emph{The free subgroup.}
Let $\dim M=n\ge3$. Retain $G=\pi_M=\pi_1(M)$ and
$\cG=\Homeo_0(M,\mu)$.
\begin{lemma}\label{lem:oriented-neighborhood}
There is a hyperbolic embedding $i:\F\hookrightarrow G$
induced by an embedded figure-eight in $M$ with an orientable
regular neighborhood. For every integer $k\ge1$, the same embedding
can be realized, after choosing basepoint paths, by $k$ disjoint
embedded figure-eights with disjoint orientable regular
neighborhoods.
\end{lemma}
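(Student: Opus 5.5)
We pass to the orientation double cover and pick two loxodromic elements there, since the kernel $G^+=\ker(w)$ of the orientation character $w:G\to\Z/2$ is the fundamental group $\pi_1(\widetilde M)$. Concretely, $G$ is a torsion-free cocompact lattice in $\Isom(\mathbb H^n)$ and acts properly and cocompactly on $\mathbb H^n$, so $G$ is hyperbolic and non-elementary. The subgroup $G^+$ has index two, so it is also non-elementary. We choose two loxodromic elements $x,y\in G^+$ with distinct fixed-point pairs on $\partial\mathbb H^n$. By a ping-pong argument (equivalently, the standard result for hyperbolic groups, e.g.\ Dahmani--Guirardel--Osin), large powers $a=x^N$, $b=y^N$ generate a free subgroup $F=\langle a,b\rangle\cong\F$. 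This subgroup is quasiconvex and malnormal after a suitable choice, hence hyperbolically embedded in $G$. Define $i:\F\to G$ by $a\mapsto x^N$, $b\mapsto y^N$. Since $x,y\in G^+$, we have $w\circ i\equiv 0$.

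Next we realize $i$ geometrically. Represent $a$ and $b$ by smooth based loops at $x_0$, giving a map of a figure-eight $S^1\vee S^1\to M$. Because $n\ge3$, general position perturbs this map to an embedding: curves in a manifold of dimension at least three have no generic self-intersections, and a wedge point can be arranged with the two loops otherwise disjoint. The perturbation is small, so the based homotopy classes are unchanged. A regular neighborhood $U$ of the embedded graph $\Xi$ deformation retracts onto $\Xi$, so $\pi_1(U)\to\pi_1(M)$ has image $i(\F)$. The first Stiefel--Whitney class of $U$ is the restriction of $w$, which vanishes on $i(\F)$. Since $\pi_1(U)$ is free, $w_1(U)$ is determined by its values on the two loops, so $w_1(U)=0$ and $U$ is orientable. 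Concretely, $U$ is a handlebody-type neighborhood built from one $0$-handle and two orientation-preserving $1$-handles.

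For the second assertion, fix $k$. Choose distinct points $y_1,\dots,y_k\in M$ together with paths $\beta_\ell$ from $x_0$ to $y_\ell$. Conjugate the two based loops to loops at $y_\ell$ via $\beta_\ell$, giving a map of $k$ figure-eights. Again by general position in dimension at least three, these $k$ graphs become disjointly embedded, and the based classes, read through the paths $\beta_\ell$, are still $i(a),i(b)$. Taking sufficiently thin regular neighborhoods makes them pairwise disjoint. Each is orientable by the same $w_1$ argument, since $w$ is conjugation-invariant and vanishes on $i(\F)$. After the basepoint paths are chosen, every copy induces exactly the embedding $i$.

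The main obstacle is to cite the hyperbolic embedding property precisely: we need free subgroups of $G^+$, rather than just of $G$, that are hyperbolically embedded in $G$ itself. The plan is to use the fact that a finite-index subgroup of a hyperbolic group contains loxodromics of $G$. We then invoke the result that, for a non-elementary hyperbolic group, suitable powers of independent loxodromic elements generate a free, malnormal, quasiconvex subgroup. Such subgroups are hyperbolically embedded, by Bowditch or by Dahmani--Guirardel--Osin, Theorem~6.8. Malnormality may require choosing $x,y$ generically, for instance with $\langle x\rangle$ and $\langle y\rangle$ both maximal cyclic and the elements non-commensurable, before passing to powers.
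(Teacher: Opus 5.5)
\textbf{Gap: the free subgroup you build is never hyperbolically embedded.} Your construction produces a free quasiconvex subgroup, but not a hyperbolically embedded one, and choosing $x,y$ more carefully cannot fix this. Take $N\ge2$ and let $H=\langle x^N,y^N\rangle$ be free on these generators.

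\begin{itemize}
\item $x\notin H$. Otherwise $x$ would lie in the centralizer in the free group $H$ of the basis element $x^N$. That centralizer is $\langle x^N\rangle$, which forces $x^{Nj-1}=1$ for some $j$. This is impossible, since $x$ is loxodromic and so has infinite order.
\item Yet $xHx^{-1}\cap H\supseteq\langle x^N\rangle$ is infinite.
\end{itemize}

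So $H$ is not even almost malnormal. Hyperbolically embedded subgroups are always almost malnormal (Dahmani--Guirardel--Osin \cite{DGO2017}), so $H$ is not hyperbolically embedded in $G$. This holds however generically $x,y$ are chosen before you pass to powers. The ``standard result'' you invoke, that suitable powers of independent loxodromics generate a free, malnormal, quasiconvex subgroup, is false for this reason. Ping-pong gives freeness and quasiconvexity, not malnormality. Taking $N=1$ does not help either, since then $\langle x,y\rangle$ need not even be free.

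\textbf{What is missing.} You need a rank-two free subgroup of the index-two subgroup $\ker w_M$ that is malnormal and quasiconvex in all of $G$. You correctly flagged this as the main obstacle. Malnormality inside $\ker w_M$ alone would not suffice, because conjugation by elements of $G\setminus\ker w_M$ must also be controlled. The paper obtains exactly this subgroup from \cite[Theorem~7.7]{Kapovich1999}. It then deduces hyperbolic embeddedness from quasiconvexity plus malnormality, via \cite[Theorem~7.11]{Bowditch2012} and \cite[Proposition~4.28(a)]{DGO2017}. A self-contained alternative would need genuinely non-power generating words in $x,y$, of small-cancellation type, plus an argument excluding conjugation by every element of $G$ outside the subgroup.

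\textbf{The topological half is fine.} General position in dimension $n\ge3$, the observation that the orientation character of a regular neighborhood is the restriction of $w_M$ (which vanishes on $i(\F)$), and the $k$ disjoint copies are all correct and essentially the paper's argument. The paper places the $k$ copies inside the single orientable neighborhood $U$, which makes their orientability automatic. You instead rerun the $w_1$ argument for each copy; that also works.
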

\begin{proof}
Let $w_M:G\to\Z/2$ be the orientation character: it is zero
on loops preserving local orientation and one on loops
reversing it.
The group $G$ is torsion-free and nonelementary hyperbolic, and
$\ker w_M$ has index two. By
\cite[Theorem~7.7]{Kapovich1999}, $\ker w_M$ contains a free
subgroup of rank two which is malnormal and quasiconvex in
the full group $G$. Such a subgroup is hyperbolically embedded;
see \cite[Theorem~7.11]{Bowditch2012} and
\cite[Proposition~4.28(a)]{DGO2017}.
Denote the resulting marked inclusion by
$i:\F=\langle a,b\rangle\hookrightarrow G$.

Represent $i(a),i(b)$ by a map of a figure-eight.
General position makes this map an embedding
without changing the induced inclusion on fundamental groups.
Take a regular neighborhood $U$. Since
$i(\F)\subset\ker w_M$, every loop in $U$ preserves local
orientation in $M$. Thus $U$ is orientable.

Apply general position simultaneously to $k$ copies of the
marked graph map inside $U$. The resulting figure-eights are pairwise
disjoint, and have disjoint small regular
neighborhoods.
\end{proof}

\emph{The local maps and Moser's theorem.}
Fix $k\ge1$ and choose an orientation on each neighborhood in
Lemma~\ref{lem:oriented-neighborhood}. There the density defining
$\mu$ is represented by a positive volume form $\omega$.
Write $D_r^{n-1}$ for the closed Euclidean ball of radius $r$,
and put $D^{n-1}=D_1^{n-1}$.
In one neighborhood, the two loops have trivial normal
bundles. Choose tubes $T_a,T_b\cong S^1\times D^{n-1}$
around them whose intersection is a ball.
On the model tube, use coordinates $\vartheta\in\R/\Z$ and
$y=(y_1,\ldots,y_{n-1})\in D^{n-1}$, and put
$$
Q_n=(\R/\Z)\times D^{n-1},\qquad\omega_0=\mathrm d\vartheta\wedge\mathrm dy_1
\wedge\cdots\wedge\mathrm dy_{n-1}.
$$
For $s=a,b$, put $c_s=\mu(T_s)/\int_{Q_n}\omega_0>0$.
Moser's theorem in the boundary version of Banyaga
\cite{Moser1965,Banyaga1974}, applied to these forms of equal
total volume, gives a parametrization
\begin{equation}\label{eq:moser}
\psi_s:T_s\longrightarrow Q_n,\qquad
\psi_s^*(c_s\omega_0)=\omega|_{T_s}.
\end{equation}
Choose the angular directions so that the trajectory cocycle
of one full circuit is $i(a)$, respectively $i(b)$, with the
convention of Section~\ref{sec:preliminaries}. Thus the
geometric circuits represent $i(a)^{-1},i(b)^{-1}$.
For $0<\eps<1$, let $\chi_\eps:D^{n-1}\to[0,1]$ be a smooth function equal to one
on $D_{1-\eps}^{n-1}$ and zero near the boundary. The isotopy
\begin{equation}\label{eq:pushing}
P_\eps^t(\vartheta,y)=(\vartheta+t\chi_\eps(y),y),\qquad 0\le t\le1
\end{equation}
preserves $\omega_0$. Conjugate by $\psi_s$ and extend by
the identity outside $T_s$. The time-one map fixes the inner
region $\psi_s^{-1}((\R/\Z)\times D_{1-\eps}^{n-1})$
pointwise, while its trajectories there make one full circuit.
The complementary collar in $T_s$ has measure tending to zero
as $\eps\to0$.

This is the construction of Brandenbursky--Menashe
\cite[Section~4.5 and Lemma~4.12]{BrandenburskyMenashe2026}
in dimension $n$; see also the tube formula in
\cite[Lemma~3.1]{BrandenburskyMarcinkowski2022}.
Here is where we use the orientability of the figure-eight
neighborhood: it gives the positive volume forms to which
Moser's theorem applies in \eqref{eq:moser}.
Apply this construction in $k$ disjoint neighborhoods
$U_1,\ldots,U_k$. Write $T_{j,a},T_{j,b}\subset U_j$ for the
two tubes and $\psi_{j,s}$ for their parametrizations, where
$s=a,b$. Define
$$
 f_{j,s,\eps}(x)=
 \begin{cases}
 \psi_{j,s}^{-1}P_\eps^1\psi_{j,s}(x),&x\in T_{j,s},\\
 x,&x\notin T_{j,s}.
 \end{cases}
$$
Let $a_j,b_j$ be the standard generators of the $j$th factor
of $\F^k$. We define
\begin{equation}\label{eq:higher-testing-map}
 \rho_\eps:\F^k\longrightarrow\cG,\qquad
 \rho_\eps(a_j)=f_{j,a,\eps},\quad
 \rho_\eps(b_j)=f_{j,b,\eps}.
\end{equation}
For $w_1,\ldots,w_k\in\F$, define
$$
 \rho_\eps(w_1,\ldots,w_k)
 =\prod_{j=1}^k w_j(f_{j,a,\eps},f_{j,b,\eps}),
$$
where each word is evaluated at the two indicated maps.
Since maps corresponding to distinct indices $j$ have
disjoint supports and therefore commute, $\rho_\eps$
is a homomorphism. For each tube define its inner region
$$
A_{j,s,\eps}=\psi_{j,s}^{-1}\bigl((\R/\Z)\times D_{1-\eps}^{n-1}\bigr),\qquad s=a,b.
$$
Let $B_\eps$ be the union of all collars, and let $U_{j,\eps}$
be the overlap of the two inner regions in $U_j$:
$$
B_\eps=\bigcup_{j=1}^k\bigcup_{s=a,b}(T_{j,s}\setminus A_{j,s,\eps}),\qquad
U_{j,\eps}=A_{j,a,\eps}\cap A_{j,b,\eps}.
$$
Then $\mu(B_\eps)\to0$, and every element of
$\rho_\eps(\F^k)$ fixes $M\setminus B_\eps$ pointwise.
In particular, $B_\eps$ is invariant under $\rho_\eps(\F^k)$.
Write $p_j:\F^k\to\F$ for the coordinate projection.
For $x\in U_{j,\eps}$, the trajectory homomorphism
$w\mapsto\gamma(\rho_\eps(w),x)$ is conjugate to $i\circ p_j$.
On the remaining fixed regions its image is cyclic or trivial;
see \cite[Lemma~11]{Brandenbursky2026}.
Moreover,
$$
\lambda_{j,\eps}=\mu(U_{j,\eps})\longrightarrow\lambda_j=\mu(T_{j,a}\cap T_{j,b})>0.
$$
\emph{Transfer and injectivity.}
\enlargethispage{3\baselineskip}
For the inclusion $i$ above, the extension theorem of
Frigerio--Pozzetti--Sisto
\cite[Corollary~2 and Proposition~5.1]{FrigerioPozzettiSisto2015}
gives, for $q=2,3$, algebraic linear sections
$$
J_q:\rHb^q(\F)\longrightarrow\rEH^q(G),\qquad i^*J_q=\id;
$$
see \cite[Lemma~6]{Brandenbursky2026}.
Choose the nonzero class $\theta\in\rHb^2(\F)$ supplied by
\cite[Lemma~7]{Brandenbursky2026}. It satisfies
$$
\theta\smile u=0\qquad(u\in\rHb^s(\F),\ s\ge1).
$$
In this section, $T_m^q$ denotes the map of
Proposition~\ref{prop:products} for this choice of $\theta$.
For $m\ge0$ and $q=2,3$, put $d=2m+q$, $k=m+1$, and define
\begin{equation}\label{eq:higher-map}
 \Phi_{m,q}:\rHb^q(\F)\longrightarrow\rEH^d(\cG),\qquad
 \Phi_{m,q}(v)=\overline\Gamma_{b,k}^{\,d}\bigl(J_2(\theta)^{\times m}\times J_q(v)\bigr).
\end{equation}
The source factors have representatives in exact bounded
cohomology. External products commute with comparison, and
the product transfer preserves exactness.
Hence $\Phi_{m,q}(v)\in\rEH^d(\cG)$.

Permutation invariance of the product measure
\cite[equation~(26)]{Brandenbursky2026} gives
$$
\Phi_{m,q}(v)=\frac1{k!}\,\overline\Gamma_{b,k}^{\,d}
\bigl(\Sym_k(J_2(\theta)^{\times m}\times J_q(v))\bigr).
$$
Put $C_k=\prod_{j=1}^k\lambda_j>0$ and apply
\cite[Proposition~12]{Brandenbursky2026} to $\rho_\eps$.
The proof partitions $M^k$ according to the trajectory types
just described. The set where an integration coordinate lies
in $B_\eps$ has measure at most $k\mu(B_\eps)$, so its
contribution tends to zero in norm. On its complement, 
the point-dependent conjugations do not
affect the resulting bounded cohomology class; see
\cite[proof of Proposition~12]{Brandenbursky2026}.
A term with a cyclic or trivial trajectory factor vanishes
in positive degrees. If two integration coordinates use the
same free factor, restriction contains either
$\theta\smile\theta$ or $\theta\smile v$, and hence vanishes.
The surviving regions are the $k!$ products obtained by
permuting $U_{1,\eps},\ldots,U_{k,\eps}$, each with measure
$\prod_j\lambda_{j,\eps}$. Their sum gives $T_m^q(v)$, with
the normalization above. Consequently,
\begin{equation}\label{eq:higher-limit}
\rho_\eps^*\Phi_{m,q}(v)\longrightarrow C_k T_m^q(v)\qquad(\eps\to0)
\end{equation}
in reduced norm for every fixed $v\in\rHb^q(\F)$.

If $\Phi_{m,q}(v)=0$, then \eqref{eq:higher-limit} and $C_k>0$
give $T_m^q(v)=0$. Proposition~\ref{prop:products} implies
$v=0$, so $\Phi_{m,q}$ is injective. By \eqref{eq:free-dimensions},
$$
\dim\rEH^{2m+q}(\cG)=\infty\qquad(m\ge0,\ q=2,3).
$$
Every integer $d\ge2$ has this form.
Lemma~\ref{lem:good-measures} gives the assertion for the
original good measure. Finally, $\rEH^d(\cG)$ is a quotient
of $EH_b^d(\cG)$, so the unreduced assertion follows.
This completes the proof of Theorem~\ref{thm:main}.

\bibliographystyle{plain}
\bibliography{references}
\end{document}